\newcommand{\ind}{\mathds{1}}
\theoremstyle{plain}
\newtheorem{theorem}{Theorem}[section]
\newtheorem{corollary}[theorem]{Corollary}
\newtheorem{lemma}[theorem]{Lemma}
\newtheorem{proposition}[theorem]{Proposition}
\theoremstyle{definition}
\newtheorem{assumption}[theorem]{Assumption}
\newtheorem{definition}[theorem]{Definition}
\theoremstyle{remark}
\newtheorem{remark}[theorem]{Remark}
\newcommand{\eps}{\varepsilon}
\newcommand{\la}{\left\langle}
\newcommand{\ra}{\right\rangle}
\newcommand{\BR}{\mathbb{R}}
\newcommand{\BP}{\mathbb{P}}
\newcommand{\BE}{\mathbb{E}}
\newcommand{\BZ}{\mathbb{Z}}
\newcommand{\BT}{\mathbb{T}}
\newcommand{\II}{\mathscr{I}} % set of discrete states (configurations)
\newcommand{\mm}{\mathsf{m}} % cardinality of state space of discrete component
\newcommand{\dd}{\mathsf{d}} % dimension of state space of continuous component
\newcommand{\rr}{\mathsf{r}} % dimension of Brownian motion
\newcommand{\per}{\textsf{per}} % for periodic
\newcommand{\filt}{\mathscr{F}} 
\newcommand{\Borel}{\mathscr{B}}
\newcommand{\gen}{\mathscr{L}}
\newcommand{\HH}{\mathbb{H}} % function space for Lax-Milgram
\newcommand{\LL}{\mathbb{L}} % product L^2 space
\newcommand{\loc}{\mathsf{loc}} % for subscript local
\newcommand{\mi}{r} % for multiindex
\newcommand{\sfdiv}{\mathsf{div}} % symbol for divergence
\newcommand{\blf}{\mathfrak{B}} % bilinear form for Lax-Milgram
\newcommand{\NN}{\mathsf{N}} % N for Poisson random measure
\newcommand{\Leb}{\mathsf{Leb}} % Lebesgue measure
\newcommand{\ttM}{\mathtt{M}} % martingales stemming from the Ito formula 
\newcommand{\scQ}{\mathscr{Q}} % operateur carre du champ
\newcommand{\ttC}{\mathtt{C}} % related to limiting covariance matrix
\newcommand{\scC}{\mathscr{C}} % space of continuous paths
\newcommand{\scD}{\mathscr{D}} % space of cadlag paths
\newcommand{\nullspace}{\mathscr{N}}
\newcommand{\Sk}{\mathsf{Sk}}
\newcommand{\ttJ}{\mathtt{J}} % intervals for h function in PRM SDE
\newcommand{\sfmin}{\mathsf{min}}
\begin{document}
\title{Periodic Homogenization for Switching Diffusions}
\author{Chetan D. Pahlajani}
\address{Department of Mathematics\\ Indian Institute of Technology Gandhinagar}
\email{cdpahlajani@iitgn.ac.in} 
\date{\today.}
\thanks{The author's research was supported by ANRF project number MTR/2023/000545. Thanks are due to Kaustubh Rane for several helpful discussions on molecular motors, which provided part of the motivation for the present work, and to Harsha Hutridurga for discussing homogenization techniques, especially the results in \cite{AllaireHutridurga-DCDSB-2015}. Finally, the author would like to acknowledge the school on ``Quantitative Theory of Homogenization" held at IIT Bombay in February 2025 for the valuable lectures and fruitful conversations.}

\maketitle

\begin{abstract}
In the present work, we explore homogenization techniques for a class of switching diffusion processes whose drift and diffusion coefficients, and jump intensities are smooth, spatially periodic functions; we assume full coupling between the continuous and discrete components of the state. Under the assumptions of uniform ellipticity of the diffusion matrices and irreducibility of the matrix of switching intensities, we explore the large-scale long-time behavior of the process under a diffusive scaling. Our main result characterizes the limiting fluctuations of the rescaled continuous component about a constant velocity drift by an effective Brownian motion with explicitly computable covariance matrix. In the process of extending classical periodic homogenization techniques for diffusions to the case of switching diffusions, our main quantitative finding is the computation of an extra contribution to the limiting diffusivity stemming from the switching.
\end{abstract}

\section{Introduction}\label{S:Introduction}
% Paragraph 1: About switching diffusion processes.
%\noindent \color{violet} (\textsf{Paragraph 1: About switching diffusion processes.}) \color{black}\\
Mathematical models for several real-world problems are naturally formulated in terms of \textit{switching Markov processes}; these are two-component stochastic processes $(X_t,I_t)$ with a continuous component $X_t$ evolving in some subset of Euclidean space and a discrete component $I_t$ taking values in a finite set. Here, the evolution of $X_t$ is governed by one of a family of Markovian generators based on the current (instantaneous) value of $I_t$, while transitions among different values of $I_t$ are governed by a jump stochastic process with intensities depending on $X_t$. Of particular interest are scenarios where the dynamics of $X_t$ are governed by a stochastic differential equation ({\sc sde}). Such \textit{switching diffusion processes} \cite{YinZhu-book}, also called \textit{diffusion-transmutation processes} \cite{FreidlinLee}, arise in numerous applications spanning the spectrum from finance and manufacturing through materials science and molecular biology; see \cite{GAM_SICON1992,YinZhu-book,TongMajda,PeletierSchlottke_EJP2024} and references therein.

% Paragraph 2: About homogenization
%\noindent \color{violet} (\textsf{Paragraph 2: About homogenization.}) \color{black}\\
In a number of applications, one encounters stochastic processes or differential equations whose coefficients exhibit spatial periodicity on a scale much smaller than the macroscopic scale of interest. An important analytical tool in such situations is the use of \textit{homogenization} \cite{BLP,PavliotisStuart} techniques to average out the rapid microscopic variations and obtain, in the limit of increasing scale separation, simpler (and often more computationally tractable) approximations. Homogenization results for stochastic processes whose governing {\sc sde} or generators exhibit spatially periodic coefficients have been extensively explored; we mention \cite{BLP,Rabi_AoP1985,HairerPavliotis_JStatPhys2004,CCKW_AoP2021,KrempPerkowski,Sandric2024} by way of a representative, albeit incomplete, list. While large classes of stochastic processes, including those with L\'evy-type generators \cite{CCKW_AoP2021,KrempPerkowski}, have been studied, with the exception of \cite{Sandric2024}, little attention seems to have been devoted to the case of switching diffusions.  
  
% Paragraph 3: What we do here
%\noindent \color{violet} (\textsf{Paragraph 3: What we do here.}) \color{black}\\
The goal of the present work is to explore homogenization for a class of switching diffusion processes whose drift and diffusion coefficients, together with switching intensities, are spatially periodic smooth functions. Starting from this (microscopic) process $(X_t,I_t)$, we use a diffusive scaling to understand the large-scale, long-time (macroscopic) dynamics of the process $(X^\eps_t,I^\eps_t) \triangleq (\eps X_{t/\eps^2}, I_{t/\eps^2})$; here, $0<\eps \ll 1$. Under the assumptions of uniform ellipticity of the diffusion coefficients and irreducibility of the matrix of jump intensities, our main result takes the form $X^\eps_t - \bar{b}\thinspace t/\eps \Longrightarrow \sqrt{\ttC}B_t$, thus characterizing the limiting fluctuations (in the sense of convergence in distribution) of $X^\eps_t$ about a constant velocity drift $\bar{b}/\eps$ by $\sqrt{\ttC}B_t$ which is an effective Brownian motion with explicitly computable covariance matrix $\ttC$. The expression for this limiting covariance matrix involves a term stemming from the diffusive dynamics, similar to that obtained in the classical theory \cite{BLP,Rabi_AoP1985}, and a new term which quantifies the contribution of the \textit{switching} dynamics to the limiting diffusivity; see Remark \ref{R:Interpretation}. Computation of this new term is arguably our main quantitative finding.   

% Paragraph 4: Novelty
%\noindent \color{violet} (\textsf{Paragraph 4: Novelty.}) \color{black}\\
As noted, the present work can be viewed as an extension of the classical periodic homogenization results for diffusions \cite{BLP,Rabi_AoP1985} to the case of switching diffusions. In terms of the analysis, this translates to working not with a single partial differential equation ({\sc pde}), but rather a \textit{system} of weakly coupled elliptic {\sc pde} while solving the \textit{cell problem}. The other novel technical feature is an extra jump martingale term in the Ito formula which arises due to the switching. In fact, it is the asymptotic behavior of this quantity that provides the additional switching contribution to the limiting diffusivity alluded to above. 

% Paragraph 5: Relation to existing work
%\noindent \color{violet} (\textsf{Paragraph 5: Relation to existing work.}) \color{black}\\
To help place this work in context and better elucidate its connection with the literature, a few comments are in order. In \cite{PeletierSchlottke_EJP2024}, the authors consider a very similar class of switching Markov processes and obtain, in an Eulerian scaling, a large deviation principle governing the probabilities of rare events. Our work complements these results by working in a diffusive scaling and quantifying typical fluctuations by means of a central limit theorem ({\sc clt}); see Remark \ref{R:Comparison}. While the paper \cite{Sandric2024} explores homogenization for a class of weakly coupled parabolic {\sc pde} using the associated switching diffusion (somewhat similar to ours) as a tool, the analysis is restricted to the case of constant switching intensities and takes a different approach. More importantly, the results in \cite{Sandric2024} do not appear to account for the extra contribution to the limiting diffusivity stemming from the switching. Finally, a slight change of perspective shows that our setting is closely related to that in \cite{AllaireHutridurga-DCDSB-2015}. Indeed, the latter studies, using analysis and {\sc pde} techniques, homogenization for a system of parabolic {\sc pde} which can be seen to resemble the backward Kolmogorov equations for our switching diffusions. In the present work, we assume greater regularity (smoothness) of coefficients compared to  
\cite{AllaireHutridurga-DCDSB-2015}, but use (mostly) probabilistic techniques to obtain a limit theorem regarding weak convergence of probability laws on function space.

% Paragraph 6: Organization of the paper, notational conventions, comments.
%\noindent \color{violet} (\textsf{Paragraph 6: Organization of the paper, notational conventions, comments.}) \color{black}\\
The rest of the paper is organized as follows. Section \ref{S:PFMR} starts with a formulation of our problem of interest, together with the appropriate assumptions. A key component in our analysis is the Fredholm alternative, which is stated in Proposition \ref{P:Fredholm}, and immediately yields a solution to the cell problem in Corollary \ref{C:cell}. Using these tools, we state our main result in Theorem \ref{T:Main}. Section \ref{S:Homogenization} is devoted to the proof of Theorem \ref{T:Main}, taking Proposition \ref{P:Fredholm} and Corollary \ref{C:cell} as the starting point. The calculations involve use of the Ito formula, a proof of ergodicity, and the use of the martingale {\sc clt}. Finally, the proof of the Fredholm alternative (Proposition \ref{P:Fredholm}) is provided in Section \ref{S:Fredholm}. Section \ref{S:Conclusions} concludes the paper with some observations and possible directions for future research.

\section{Problem Formulation and Main Result}\label{S:PFMR}
Fix positive integers $\dd,\mm \ge 1$. Consider a two-component stochastic process $(X_t,I_t)$ with continuous component $X_t$ taking values in $\BR^\dd$, and discrete component $I_t$ taking values in $\II \triangleq \{1,2,\dots,\mm\}$, which evolves according to
\begin{equation}\label{E:sw-sde-micro}
dX_t = b(X_t,I_t) \thinspace dt + \sigma(X_t,I_t) \thinspace dW_t\\
\end{equation}
\begin{equation}\label{E:jump-micro}
\BP\left(I_{t+\Delta}=\beta| \thinspace I_t=\alpha, (X_s,I_s), 0 \le s \le t\right) =q_{\alpha\beta}(X_t)\Delta + o(\Delta), \quad \text{as $\Delta \searrow 0$, $\alpha \neq \beta$,}
\end{equation}
where $W_t$ is an $\rr$-dimensional Brownian motion, the coefficients $b(\cdot,\alpha):\BR^\dd \to \BR^\dd$, $\sigma(\cdot,\alpha):\BR^\dd \to \BR^{\dd\times \rr}$, $1 \le \alpha \le \mm$, and  transition intensities $q_{\alpha\beta}:\BR^\dd \to [0,\infty)$, $\alpha,\beta \in \II$, $\alpha \neq \beta$, are sufficiently regular; see Assumptions \ref{A:DD}--\ref{A:Periodicity} below. Such switching diffusion processes are used to model situations where a system of interest with diffusive dynamics may be operating in one of $\mm$ ``modes" or configurations, with state-dependent intensities governing the stochastic switching among modes. 

\begin{assumption}[Drift and diffusion coefficients]\label{A:DD}
For each $\alpha \in \{1,\dots,\mm\}$, the functions $b(\cdot,\alpha): \BR^\dd \to \BR^\dd$ and $\sigma(\cdot,\alpha): \BR^\dd \to \BR^{\dd \times \rr}$ are smooth.  
Further, the diffusion matrices $a(x,\alpha)\triangleq \sigma(x,\alpha)\sigma(x,\alpha)^\intercal$ are uniformly elliptic, i.e., there exists a constant $a_\sfmin>0$ such that 
\begin{equation}\label{E:ellipticity}
 \left(a(x,\alpha)\xi, \xi \right)_{\BR^\dd} \ge a_\sfmin|\xi|^2 \qquad \text{for all $x,\xi \in \BR^\dd$, $1 \le \alpha \le \mm$.}
\end{equation}
\end{assumption}

\begin{assumption}[Switching intensities]\label{A:SI}
The family of functions $\{q_{\alpha\beta}(x)\}_{1 \le \alpha,\beta \le \mm}$ satisfies the following:
\begin{enumerate}[(i)]
\item For each $1 \le \alpha, \beta \le \mm$, $\beta \neq \alpha$, the function $q_{\alpha\beta}:\BR^\dd \to [0,\infty)$ is smooth and bounded. Thus, there exists $\bar{q}>0$ such that $0 \le q_{\alpha\beta}(x) \le \bar{q}$ for all $\beta \neq \alpha$, $x \in \BR^\dd$. 
\item For each $\alpha$, set $q_{\alpha\alpha}(x) \triangleq -\sum_{\substack{\beta =1\\\beta \neq \alpha}}^\mm q_{\alpha\beta}(x) \le 0$; we note that for each $\alpha \in \II$, we have $\sum_{\beta=1}^\mm q_{\alpha\beta}(x)=0$ for all $x \in \BR^\dd$.
\item The matrix $Q(x)=[q_{\alpha\beta}(x)]_{1 \le \alpha,\beta \le \mm}$ is irreducible for each $x \in \BR^\dd$. Thus, it is not possible to partition $\II$ into nonempty disjoint sets $\mathcal{B}$ and $\mathcal{B}^\prime$ such that $q_{\alpha\beta}(x)=0$ for all $\alpha \in \mathcal{B}$, $\beta \in \mathcal{B}^\prime$. 
\end{enumerate}
\end{assumption}

\begin{assumption}[Periodicity]\label{A:Periodicity}
The drift and diffusion coefficients, and the switching intensities are $1$-periodic in $x$ for each fixed $\alpha \in \II$, i.e., 
\begin{equation*}
b(x+\nu,\alpha)=b(x,\alpha), \quad \sigma(x+\nu,\alpha)=\sigma(x,\alpha), \quad q_{\alpha\beta}(x+\nu)=q_{\alpha\beta}(x) \qquad \text{for all $x \in \BR^\dd$, $\alpha, \beta \in \II$, $\nu \in \BZ^\dd$.}
\end{equation*}
\end{assumption}

To understand the behavior of the process $(X_t,I_t)$ at large spatial and temporal scales, we introduce a small parameter $0 < \eps \ll 1$, and consider the rescaled stochastic process $(X^\eps_t,I^\eps_t) \triangleq (\eps X_{t/\eps^2}, I_{t/\eps^2})$. For each (fixed) $\eps \in (0,1)$, the equations \eqref{E:sw-sde-micro} and \eqref{E:jump-micro} now become
\begin{equation}\label{E:sw-sde-macro}
dX^\eps_t = \frac{1}{\eps} b\left(\frac{X^\eps_t}{\eps},I^\eps_t\right) \thinspace dt + \sigma\left(\frac{X^\eps_t}{\eps},I^\eps_t\right) \thinspace dW_t\\
\end{equation}
\begin{equation}\label{E:jump-macro}
\BP\left(I^\eps_{t+\Delta}=\beta| \thinspace I^\eps_t=\alpha, (X^\eps_s,I^\eps_s), 0 \le s \le t\right) =\frac{1}{\eps^2} q_{\alpha\beta}\left(\frac{X^\eps_t}{\eps}\right)\Delta + o(\Delta), \quad \text{as $\Delta \searrow 0$, $\alpha \neq \beta$.}
\end{equation}
Our main result, formulated precisely in Theorem \ref{T:Main}, asserts that as $\eps \searrow 0$, the process $X^\eps_t$ can be approximated (in a suitable sense) by $\overline{b}\thinspace t/\eps + \sqrt{\ttC} B_t$, where $\overline{b}$ is an effective drift obtained by averaging $b(x,\alpha)$ with respect to a suitable invariant measure, $B_t$ is a $\dd$-dimensional Brownian motion, and $\ttC$ is an explicitly computable covariance matrix which is obtained by \textit{homogenization}, i.e., averaging over the fast spatially periodic variations in \eqref{E:sw-sde-macro}, \eqref{E:jump-macro}.

We start by writing both the microscopic process $(X_t,I_t)$ solving \eqref{E:sw-sde-micro}, \eqref{E:jump-micro} and the macroscopic process $(X^\eps_t,I^\eps_t)$ solving \eqref{E:sw-sde-macro}, \eqref{E:jump-macro} as solutions to {\sc sde} involving \textit{Poisson random measures} ({\sc prm}); see \cite{IkedaWatanabe} for a detailed rigorous treatment of {\sc prm}, including the concepts and tools used in the sequel. Let $(\Omega,\filt,\BP)$ be a probability space equipped with the filtration $\{\filt_t:t \ge 0\}$, on which is defined an $\rr$-dimensional $\filt_t$-Brownian motion $W_t$. Assume that this setup is rich enough to also support an independent (of $W_t$) $\filt_t$-\textit{Poisson point process} ({\sc ppp}) $p$ on $\BR$ with intensity measure $dt \times \Leb(dz)$, where $\Leb$ denotes Lebesgue measure on $\BR$. Let $\NN_p(dt,dz)$ be the (random) counting measure on $(0,\infty)\times \BR$ induced by $p$. By assumption, $\NN_p(dt,dz)$ is a {\sc prm} on $(0,\infty)\times \BR$ with $\BE[\NN_p(dt,dz)]=dt \times \Leb(dz)$. For $t \ge 0$, $U \in \Borel(\BR)$, define the \textit{compensator} of $\NN_p(t,U)$ by 
\begin{equation*}
\widehat{\NN}_p(t,U) \triangleq \BE[\NN_p(t,U)] = t \thinspace \Leb(U),
\end{equation*}
where the last equality follows from the assumptions regarding $p$. Note that for any $U \in \Borel(\BR)$ with $\Leb(U)<\infty$, the process $\{\widetilde{\NN}_p(t,U):t \ge 0\}$ defined by 
\begin{equation}\label{E:cprm}
\widetilde{\NN}_p(t,U) \triangleq \NN_p(t,U) - \widehat{\NN}_p(t,U) \qquad \text{is an $\filt_t$-martingale.}
\end{equation}

The algorithm for constructing our processes of interest as solutions to {\sc sde} involving {\sc prm} is based on \cite{GAM_SICON1992}, \cite{YinZhu-book}. We start with the microscopic process. For $x \in \BR^\dd$, $1 \le \alpha,\beta \le \mm$, $\alpha \neq \beta$, let $\ttJ_{\alpha\beta}(x)$ be the consecutive lexicographically ordered left-closed right-open intervals on $\BR$ with $\ttJ_{\alpha\beta}(x)$ having length $q_{\alpha\beta}(x)$. Now define a function $h:\BR^\dd \times \II \times \BR \to \BR$ by
%\begin{equation*}
$h(x,\alpha,z) \triangleq \sum_{\beta=1}^\mm (\beta-\alpha) \ind_{\ttJ_{\alpha\beta}(x)}(z)$.
%\end{equation*}
The process $(X_t,I_t)$ can be obtained as a solution of the {\sc sde}
\begin{equation*}\label{E:sde-prm-micro}
\begin{aligned}
dX_t &= b(X_t,I_t) \thinspace dt + \sigma(X_t,I_t) \thinspace dW_t\\
dI_t &= \int_{\BR} h(X_{t-},I_{t-},z) \thinspace \NN_p(dt,dz),\\
(X_0,I_0) &= (x_0,\alpha_0) \in \BR^\dd \times \II.
\end{aligned}
\end{equation*}
In the macroscopic picture, the transition intensities $q_{\alpha\beta}^\eps(x)$ are given by
\begin{equation}\label{E:sw-int-macro}
q_{\alpha\beta}^\eps(x) \triangleq \frac{1}{\eps^2}q_{\alpha\beta}\left(\frac{x}{\eps}\right) \qquad \text{for $x \in \BR^\dd$, $1 \le \alpha, \beta \le \mm$, $\beta \neq \alpha$, and $\eps \in (0,1)$.}
\end{equation}
We let $\ttJ_{\alpha\beta}^\eps(x)$, $\beta \neq \alpha$, be the consecutive lexicographically ordered left-closed right-open intervals on $\BR$ with $\ttJ_{\alpha\beta}^\eps(x)$ having length $q_{\alpha\beta}^\eps(x)$ and define the function $h^\eps:\BR^\dd \times \II \times \BR \to \BR$ by
\begin{equation}\label{E:h-eps}
h^\eps(x,\alpha,z) \triangleq \sum_{\beta=1}^\mm (\beta-\alpha) \ind_{\ttJ_{\alpha\beta}^\eps(x)}(z).
\end{equation}
The process $(X^\eps_t,I^\eps_t)$ can now be obtained as a solution of the {\sc sde}
\begin{equation}\label{E:sde-prm-macro}
\begin{aligned}
dX^\eps_t &= \frac{1}{\eps} b\left(\frac{X^\eps_t}{\eps},I^\eps_t\right) \thinspace dt + \sigma\left(\frac{X^\eps_t}{\eps},I^\eps_t\right) \thinspace dW_t\\
dI^\eps_t &= \int_{\BR} h^\eps(X^\eps_{t-},I^\eps_{t-},z) \thinspace \NN_p(dt,dz),\\
(X^\eps_0,I^\eps_0) &= (\eps x_0,\alpha_0).
\end{aligned}
\end{equation}

\begin{remark}
Note that the expressions defining the quantities $h(x,\alpha,z)$ and $h^\eps(x,\alpha,z)$ (effectively) only involve sums over $\beta \neq \alpha$, regardless of how the sets $\ttJ_{\alpha\alpha}(x)$, $\ttJ^\eps_{\alpha\alpha}(x)$ are defined. Nonetheless, we will take $\ttJ_{\alpha\alpha}(x)=\ttJ^\eps_{\alpha\alpha}(x)=\emptyset$ for all $1 \le \alpha \le \mm$, $x \in \BR^\dd$.
\end{remark}

We next introduce some operators which will play a key role in the analysis. 
For $f:\BR^\dd \times \II \to \BR$ such that $f(\cdot,\alpha):\BR^\dd \to \BR$ is $C^2$ for every $\alpha \in \II$, let
\begin{equation}\label{E:gen-micro}
\begin{aligned}
\gen f(x,\alpha) &\triangleq \sum_{j=1}^\dd b_j(x,\alpha)\frac{\partial f}{\partial x_j}(x,\alpha) + \frac{1}{2}\sum_{j,k=1}^\dd a_{jk}(x,\alpha) \frac{\partial^2 f}{\partial x_j \partial x_k}(x,\alpha) + \sum_{\substack{\beta \in \II\\\beta \neq \alpha}} q_{\alpha\beta}(x) \left[f(x,\beta)-f(x,\alpha)\right],\\
& \text{where $a(x,\alpha) \triangleq \sigma(x,\alpha)\sigma(x,\alpha)^\intercal$.}
\end{aligned}
\end{equation}
The operator $\gen$ is the \textit{generator} of the process $(X_t,I_t)$ and has \textit{adjoint} $\gen^*$ whose action on functions $f:\BR^\dd \times \II \to \BR$ is given by
\begin{equation}\label{E:adjoint}
\begin{aligned}
\gen^* f(x,\alpha) &\triangleq -\sum_{j=1}^\dd \frac{\partial}{\partial x_j}\left(b_j(x,\alpha) f(x,\alpha)\right) + \frac{1}{2}\sum_{j,k=1}^\dd  \frac{\partial^2}{\partial x_j \partial x_k}\left(a_{jk}(x,\alpha) f(x,\alpha)\right) + \sum_{\substack{\beta \in \II\\\beta \neq \alpha}} q^*_{\alpha\beta}(x) \left[f(x,\beta)-f(x,\alpha)\right]\\
& \text{where $q^*_{\alpha\beta}(x) \triangleq q_{\beta\alpha}(x)$ for $\alpha,\beta \in \II$, $x \in \BR^\dd$.}
\end{aligned}
\end{equation}

\begin{remark}\label{R:jump-gen}
It is helpful to note that the difference term on the right-hand side of \eqref{E:gen-micro} admits more than one representation. 
To see this, let $f:\BR^\dd \times \II \to \BR$ with $f(\cdot,\alpha) \in C^2(\BR^\dd)$ for every $\alpha \in \II$. 
It is now easily checked, using the properties of the matrix $Q(x)$ in Assumption \ref{A:SI}, that the \textit{jump} part $Qf(x,\alpha)$ of the generator $\gen f(x,\alpha)$ satisfies
\begin{equation}\label{E:jump-gen-micro}
Qf(x,\alpha) \triangleq \sum_{\substack{\beta \in \II\\\beta \neq \alpha}} q_{\alpha\beta}(x) \left[f(x,\beta)-f(x,\alpha)\right] = \sum_{\beta =1}^\mm q_{\alpha\beta}(x) \left[f(x,\beta)-f(x,\alpha)\right] = \sum_{\beta =1}^\mm q_{\alpha\beta}(x) f(x,\beta).
\end{equation}
Similar considerations apply to the operators $\gen^*$ and $\gen^\eps$ defined in \eqref{E:adjoint} and \eqref{E:macro-gen}, respectively.
\end{remark}

Pivotal to stating and then proving our main result is Proposition \ref{P:Fredholm} below, which states a Fredholm alternative for solvability of {\sc pde} of the form $\gen u(x,\alpha)=\phi(x,\alpha)$, $\gen^* m(x,\alpha)=\psi(x,\alpha)$, with periodic boundary conditions (in $x$). Owing to this periodicity, we will frequently find it convenient to think of the spatial variable $x$ as taking values in the $\dd$-dimensional torus $\BT^\dd \triangleq \BR^\dd/\BZ^\dd$. The latter consists of all equivalence classes under the relation $\sim$ on $\BR^\dd$  defined by $x \sim y$, $x,y \in \BR^\dd$, if and only if $x-y \in \BZ^\dd$. We define the projection map $\pi:\BR^\dd \to \BT^\dd$ that maps each point in $\BR^\dd$ to the corresponding equivalence class under $\sim$, i.e., $\pi(x)=[x]=x+\BZ^\dd \triangleq \{x+k:k \in \BZ^\dd\}$. 

Armed with the notation above, we can now think of a $1$-periodic function $g:\BR^\dd \to \BR$ as the function $\bar{g}:\BT^\dd \to \BR$ defined by $\bar{g}(\pi(x))=g(x)$. The same recipe also allows us to extend a function $\bar{h}:\BT^\dd \to \BR$ to yield a $1$-periodic function $h:\BR^\dd \to \BR$. We will say that a function $\bar g \in C^k(\BT^\dd)$, $k \in \BZ^+$, if $\bar g \circ \pi$ is a $1$-periodic $C^k$ function on $\BR^\dd$. More generally, given a function $f:\BR^\dd \times \II \to \BR$ which is $1$-periodic in $x$ for each $\alpha \in \II$, we define $\bar{f}:\BT^\dd \times \II \to \BR$ by setting $\bar{f}(\pi(x),\alpha)=f(x,\alpha)$. Further, we will say that the function $\bar{f} \in C^k(\BT^\dd \times \II)$, $k \in \BZ^+$, if for each $\alpha \in \II$, the function $\bar f \circ \pi$ is a $1$-periodic $C^k$ function in $x$ on $\BR^\dd$. As usual, the term smooth will imply membership in $C^k$ for every $k \in \BZ^+$. For ease of notation, we will frequently go back and forth between viewing periodic functions as having the spatial variable taking values in the whole space $\BR^\dd$ versus the torus $\BT^\dd$, and will simply write $f$, $g$, etc. in either case.

\begin{proposition}[Fredholm alternative]\label{P:Fredholm}
Suppose that Assumptions \ref{A:DD}, \ref{A:SI}, \ref{A:Periodicity} hold. Then, there exists a unique function $m:\BR^\dd \times \II \to [0,\infty)$ such that $x \mapsto  m(x,\alpha) \in C^2$ for every $\alpha \in \II$ and 
\begin{equation}\label{E:inv-dens-pde}
(\gen^* m)(x,\alpha)=0 \quad \text{for all $(x,\alpha) \in \BR^\dd\times\II$,} \quad \text{$m$ is $1$-periodic in $x$}, \quad \sum_{\alpha=1}^\mm \int_{\BT^\dd} m(x,\alpha) \thinspace dx=1.
\end{equation}
Let $\varphi:\BR^\dd \times \II \to \BR$ be a smooth function which is $1$-periodic in $x$. Then, there exists a unique function $\Phi(x,\alpha):\BR^\dd \times \II \to \BR$ with $x \mapsto \Phi(x,\alpha) \in C^2$ for each $\alpha \in \II$ solving the {\sc pde}
\begin{equation}\label{E:Fredholm-cell}
(\gen \Phi)(x,\alpha)=\varphi(x,\alpha) \quad \text{for all $(x,\alpha) \in \BR^\dd\times\II$,} \quad \text{$\Phi$ is $1$-periodic in $x$}, \quad \sum_{\alpha=1}^\mm \int_{\BT^\dd} \Phi(x,\alpha) \thinspace dx=0
\end{equation}
if and only if
\begin{equation}\label{E:centering}
\sum_{\alpha=1}^\mm \int_{\BT^\dd} \varphi(x,\alpha) m(x,\alpha) \thinspace dx=0.
\end{equation}
\end{proposition}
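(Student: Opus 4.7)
The plan is to prove Proposition \ref{P:Fredholm} by treating it as a Fredholm-type result for a weakly coupled uniformly elliptic system on the torus, combining elliptic regularity with ergodicity of the Markov process $(X_t,I_t)$ viewed on the compact state space $\BT^\dd \times \II$. The key structural observation is that since $q_{\alpha\beta}(x) \ge 0$ for $\beta \ne \alpha$ and $\sum_\beta q_{\alpha\beta}(x) = 0$, the operator $\gen$ acts on vectors $(f(\cdot,1),\ldots,f(\cdot,\mm))$ as a cooperative weakly coupled second-order system satisfying a strong maximum principle.

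\textbf{Step 1: Invariant density.} Uniform ellipticity of $a(\cdot,\alpha)$ makes the continuous part strongly Feller, while irreducibility of $Q(x)$ together with positivity of the off-diagonal intensities ensures that all discrete modes communicate. Compactness of $\BT^\dd \times \II$ together with a Doeblin-type minorization (at some fixed time the transition kernel of $(X_t,I_t)$ dominates a constant multiple of $\Leb$ times counting measure on $\II$) yields a unique invariant probability measure $\mu$ and an exponential mixing estimate
\begin{equation*}
\|P_t \varphi - \mu(\varphi)\|_\infty \le C e^{-\lambda t}\|\varphi\|_\infty, \qquad t \ge 0,
\end{equation*}
for some $\lambda > 0$. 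Standard elliptic regularity applied mode by mode to $\gen^* m = 0$ (treating $Q^*$ as a zeroth-order coupling) shows $\mu$ has a smooth density $m$ satisfying \eqref{E:inv-dens-pde}; strict positivity follows either from Harnack-type bounds for the cooperative system or directly from irreducibility of the semigroup.

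\textbf{Step 2: Necessity and sufficiency.} If $\gen\Phi = \varphi$ for smooth $1$-periodic $\Phi$, integration by parts against $m$ yields
\begin{equation*}
\sum_{\alpha=1}^\mm \int_{\BT^\dd} \varphi \thinspace m \thinspace dx = \sum_{\alpha=1}^\mm \int_{\BT^\dd} (\gen\Phi)\thinspace m \thinspace dx = \sum_{\alpha=1}^\mm \int_{\BT^\dd} \Phi\thinspace (\gen^* m)\thinspace dx = 0,
\end{equation*}
which is the necessity of \eqref{E:centering}. For sufficiency, given centered $\varphi$ (so $\mu(\varphi)=0$), I would set
\begin{equation*}
\Phi(x,\alpha) \triangleq -\int_0^\infty (P_t\varphi)(x,\alpha)\thinspace dt,
\end{equation*}
the integral converging absolutely by Step 1. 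Semigroup calculus then gives $\gen\Phi = \varphi - \lim_{T\to\infty} P_T \varphi = \varphi$; elliptic bootstrapping upgrades $\Phi$ to a smooth periodic function; and subtracting a constant enforces the normalization. Uniqueness of $\Phi$ reduces to the claim that $\gen\Psi = 0$ with $\sum_\alpha \int \Psi\thinspace m\thinspace dx = 0$ forces $\Psi \equiv 0$: any such $\Psi$ satisfies $P_t\Psi = \Psi$ for all $t$, and ergodicity then gives $\Psi \equiv \mu(\Psi) = 0$.

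\textbf{Main obstacle.} The principal challenge is the passage from classical scalar PDE theory to the weakly coupled system. This requires two ingredients that rest on the interplay between ellipticity and irreducibility: a Doeblin minorization establishing uniform exponential mixing despite the intertwining of diffusive and switching dynamics, and a maximum-principle-based identification of $\ker\gen$ with the constants (and dually $\ker\gen^*$ with the span of $m$). Once these two ergodic/PDE facts about the system are in hand, the Fredholm alternative follows from largely standard semigroup and duality calculus.
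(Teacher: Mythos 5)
Your proposal is correct in outline but follows a genuinely different route from the paper. The paper treats this as a pure \textsc{pde}-functional-analysis problem: it reformulates $\gen u = \varphi$ as the weakly coupled elliptic system $(-L-Q)\bm u = -\bm\varphi$ in periodic Sobolev spaces, proves coercivity and boundedness of the associated bilinear form $\blf$, applies Lax--Milgram to show that the perturbed operator $\lambda I_\mm - L - Q$ has a compact resolvent $G_\lambda$ on $\LL = (L^2_\per(\BT^\dd))^\mm$, and then invokes the classical Fredholm alternative for compact operators in a Hilbert space. The hard part --- showing that the null spaces of $-L-Q$ and $-L^*-Q^*$ are both one-dimensional, spanned respectively by the constant vector $(1,\ldots,1)$ and a smooth, strictly positive $m$ --- is delegated to cited results on principal eigenvalues of cooperative systems (\cite{PeletierSchlottke_EJP2024}, Prop.\ B.2 and \cite{AllaireHutridurga-DCDSB-2015}, Prop.\ 1), and regularity comes from elliptic bootstrapping. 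Your approach instead builds the solution probabilistically: exponential ergodicity of $(X_t,I_t)$ on $\BT^\dd \times \II$ via a Doeblin minorization gives both the invariant density $m$ and the explicit resolvent formula $\Phi = -\int_0^\infty P_t\varphi\,dt$ for centered $\varphi$; necessity is duality against $\gen^* m = 0$; uniqueness comes from $P_t\Psi = \Psi \to \mu(\Psi)$. The two routes buy different things: the paper's variational route fits the ``homogenization toolbox'' style, is agnostic to probabilistic structure, and makes the precise regularity of the coefficients visible; yours gives an explicit solution formula that is conceptually tied to the ergodicity already used in Lemma \ref{L:ergodicity}, and the dichotomy solvable-iff-centered is transparent rather than mediated by an abstract Fredholm theorem.

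Two points of care if you pursue your route. First, the exponential mixing and Doeblin minorization for a switching diffusion with fully coupled jump intensities are themselves nontrivial and would need a careful argument (the strong Feller property of the coupled semigroup and a uniform lower bound on the transition kernel are exactly what the irreducibility of $Q(x)$ and the uniform ellipticity are there to provide, but the details deserve more than a sentence, just as the paper has to lean on its cited references for the analogous facts). Second, in your uniqueness step you wrote the centering condition on $\Psi$ as $\sum_\alpha \int \Psi\,m\,dx = 0$, but the normalization in \eqref{E:Fredholm-cell} is $\sum_\alpha \int \Psi\,dx = 0$; the conclusion $\Psi \equiv 0$ still follows because ergodicity forces $\Psi$ to be a constant, and either normalization annihilates the constant, but the mismatch should be fixed. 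Finally, the regularity bootstrap (from $m$ a priori a measure to $m$ smooth, and from $\Phi \in L^\infty$ to $\Phi$ smooth) is glossed over; the iteration through $W^{2,p}$ spaces using the coupling as a zeroth-order inhomogeneity is standard but worth flagging explicitly, especially since you need $\Phi \in C^2$ to feed it into the Ito formula later.
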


\begin{remark}\label{R:Fredholm}
While the proof of Proposition \ref{P:Fredholm} will be provided later in Section \ref{S:Fredholm}, we pause to make some important observations and comment on the significance of Assumptions \ref{A:DD}--\ref{A:Periodicity}.  
We start by noting that the function $m(x,\alpha)$ in \eqref{E:inv-dens-pde} admits an interpretation as the invariant density of the process $(\pi(X^\eps_t),I^\eps_t)$ taking values in $\BT^\dd \times \II$, which is obtained by projecting $X^\eps_t$ on to the torus. 
Turning to the issue of Assumptions \ref{A:DD}--\ref{A:Periodicity}, the uniform ellipticity of the diffusion matrices enables us to easily study solvability questions for the {\sc pde} in \eqref{E:inv-dens-pde} and \eqref{E:Fredholm-cell} using variational techniques, mimicking the arguments in \cite[Chapter 6]{Evans-PDE}. Together with the ellipticity, the irreducibility of the matrix $Q(x)$ of jump intensities and the smoothness of all coefficients allow us to use results from \cite{AllaireHutridurga-DCDSB-2015}, \cite{PeletierSchlottke_EJP2024}  (which in turn build on work of \cite{Sweers}) to assert that smooth solutions to \eqref{E:inv-dens-pde} exist and are unique up to normalization. Finally, the smoothness of coefficients ensures that the solutions to the {\sc pde} \eqref{E:Fredholm-cell}, when they exist, are at least $C^2$, thus rendering them suitable for use in the Ito formula.
\end{remark}

A straightforward application of Proposition \ref{P:Fredholm} now provides us a solution to the \textit{cell problem} codified in \eqref{E:cell-problem-i} below.

\begin{corollary}[Solution to the cell problem]\label{C:cell}
Let $\overline{b}=(b_1,\dots,b_\dd)$ be the average of the drift vector $b(x,\alpha)$ with respect to the invariant density $m(x,\alpha)$ given by \eqref{E:inv-dens-pde}, i.e.,
\begin{equation}\label{E:b-bar}
\overline{b} \triangleq \sum_{\alpha=1}^\mm \int_{\BT^\dd} b(x,\alpha) m(x,\alpha) \thinspace dx.
\end{equation}
Then, for $1 \le k \le \dd$, there exists a unique $\Phi_k:\BR^\dd \times \II \to \BR$ with $x \mapsto \Phi_k(x,\alpha) \in C^2(\BT^\dd)$ for every $\alpha \in \II$ solving the cell problem
\begin{equation}\label{E:cell-problem-i}
(\gen \Phi_k)(x,\alpha)=b_k(x,\alpha)-\overline{b}_k \quad \text{for all $(x,\alpha) \in \BR^\dd\times\II$,} \quad \text{$\Phi_k$ is $1$-periodic}, \quad \sum_{\alpha=1}^\mm \int_{\BT^\dd} \Phi_k(x,\alpha) \thinspace dx=0.
\end{equation}
\end{corollary}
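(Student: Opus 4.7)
The plan is to obtain this as a direct consequence of Proposition \ref{P:Fredholm}. I fix $k \in \{1,\dots,\dd\}$ and set $\varphi(x,\alpha) \triangleq b_k(x,\alpha) - \overline{b}_k$. By Assumption \ref{A:DD}, $b_k(\cdot,\alpha)$ is smooth for each $\alpha \in \II$, and by Assumption \ref{A:Periodicity} it is $1$-periodic in $x$; since $\overline{b}_k$ is a constant, $\varphi$ is a smooth function on $\BR^\dd \times \II$ that is $1$-periodic in $x$. Thus $\varphi$ is admissible as input to Proposition \ref{P:Fredholm}.

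The only thing to verify is the centering condition \eqref{E:centering}. By the definition of $\overline{b}_k$ in \eqref{E:b-bar} and the normalization of $m$ in \eqref{E:inv-dens-pde},
\begin{equation*}
\sum_{\alpha=1}^\mm \int_{\BT^\dd} \varphi(x,\alpha) m(x,\alpha) \thinspace dx = \sum_{\alpha=1}^\mm \int_{\BT^\dd} b_k(x,\alpha) m(x,\alpha) \thinspace dx - \overline{b}_k \sum_{\alpha=1}^\mm \int_{\BT^\dd} m(x,\alpha) \thinspace dx = \overline{b}_k - \overline{b}_k = 0,
\end{equation*}
so \eqref{E:centering} holds. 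Proposition \ref{P:Fredholm} then yields a unique function $\Phi_k:\BR^\dd \times \II \to \BR$ with $x \mapsto \Phi_k(x,\alpha) \in C^2$ for each $\alpha \in \II$, which is $1$-periodic in $x$, satisfies $(\gen \Phi_k)(x,\alpha) = b_k(x,\alpha)-\overline{b}_k$ pointwise, and has zero mean in the sense required by \eqref{E:cell-problem-i}. Passing from $C^2$ on $\BR^\dd$ (with periodicity) to $C^2(\BT^\dd)$ is then just a matter of identification via the projection $\pi$ described before Proposition \ref{P:Fredholm}.

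There is no real obstacle here: all the heavy lifting (well-posedness of the adjoint equation \eqref{E:inv-dens-pde}, existence of $m$, existence and uniqueness of solutions to \eqref{E:Fredholm-cell} under the compatibility condition) has been absorbed into Proposition \ref{P:Fredholm}. The content of the corollary is simply that the specific right-hand side $b_k - \overline{b}_k$ has been centered precisely so as to satisfy the Fredholm solvability condition, and this is immediate from the definition of $\overline{b}$.
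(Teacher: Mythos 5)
Your proof is correct and matches the paper's approach exactly: the paper simply remarks that the corollary is "a straightforward application of Proposition \ref{P:Fredholm}," and your verification that $\varphi = b_k - \overline{b}_k$ is smooth, periodic, and satisfies the centering condition \eqref{E:centering} (via the normalization of $m$) is precisely the check that is implicitly being made.
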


To precisely formulate our main results, we need a couple more pieces of notation. First, we recall from \eqref{E:jump-gen-micro} the jump part $Qf$ of the generator $\gen f$, and define, for functions $f,g:\BR^\dd \times \II \to \BR$, the \textit{carr\'e-du-champ} operator 
\begin{multline}\label{E:cdc}% operateur carre du champ
\scQ(f,g)(x,\alpha) \triangleq Q(fg)(x,\alpha)-f(x,\alpha)\thinspace Qg(x,\alpha) - g(x,\alpha) \thinspace Qf(x,\alpha)\\
=\sum_{\beta=1}^\mm q_{\alpha\beta}\left(x\right)\left\{f\left(x,\beta\right)-
f\left(x,\alpha\right)\right\} \left\{g\left(x,\beta\right)-
g\left(x,\alpha\right)\right\}.
\end{multline}
Next, we introduce some spaces of paths corresponding to the trajectories of our stochastic processes of interest. Let $\scD([0,\infty);\BR^\dd)$ be the space of functions mapping $[0,\infty)$ to $\BR^\dd$ which are right-continuous with left limits, equipped with the Skorohod metric \cite[Section 3.5]{EK86}. Let $\scC([0,\infty);\BR^\dd)$ be the closed subset of $\scD([0,\infty);\BR^\dd)$ consisting of all continuous functions, noting \cite[Problem 3.11.25]{EK86} that the Skorohod metric, restricted to $\scC([0,\infty);\BR^\dd)$, generates the topology of uniform convergence on compact intervals of $[0,\infty)$. Finally, we will use the symbol $\Longrightarrow$ to denote convergence in distribution for processes with sample paths in the above function spaces.

\begin{theorem}[Main Result]\label{T:Main}
Let $\Phi:\BR^\dd \times \II \to \BR^\dd$ be defined by $\Phi(x,\alpha)\triangleq (\Phi_1(x,\alpha),\dots,\Phi_\dd(x,\alpha))$, where for $1 \le k \le \dd$, $\Phi_k(x,\alpha)$ solves the cell problem \eqref{E:cell-problem-i}. Let $\ttC=\left[\ttC_{kl}\right]_{1 \le k,l \le \dd}$ be the symmetric nonnegative-definite $\dd \times \dd$ matrix defined by
\begin{multline}\label{E:limcov}
\ttC \triangleq \sum_{\alpha=1}^\mm \int_{\BT^\dd}\biggl[ \Bigl((I_\dd - D\Phi)\thinspace a \thinspace (I-D\Phi)^\intercal\Big)(x,\alpha) + \scQ(\Phi,\Phi)(x,\alpha) \biggr] m(x,\alpha) \thinspace dx \quad \text{where}\\
\scQ(\Phi,\Phi)(x,\alpha) \triangleq \left[\scQ(\Phi_k,\Phi_l)(x,\alpha)\right]_{1 \le k,l \le \dd},
\end{multline}
%\begin{equation}\label{E:limcov}
%\ttC_{kl} \triangleq \sum_{\alpha=1}^\mm \int_{\BT^\dd}\biggl[ \Bigl((I_\dd - D\Phi)\thinspace a \thinspace (I-D\Phi)^\intercal\Big)_{kl}(x,\alpha) + \scQ(\Phi_k,\Phi_l)(x,\alpha) \biggr] m(x,\alpha) \thinspace dx
%\end{equation}
$D\Phi(x,\alpha)$ is the Jacobian (in $x$) of $\Phi$ with $k$-th row given by $\nabla \Phi_k(x,\alpha)$, $1 \le k \le \dd$, and $I_\dd$ denotes the $\dd \times \dd$ identity  matrix. Let $\overline{b} \triangleq \sum_{\alpha=1}^\mm \int_{\BT^\dd} b(x,\alpha) m(x,\alpha) \thinspace dx$,  as in Corollary \ref{C:cell}. Then,
\begin{equation*}
X^\eps_t - \frac{\overline{b}\thinspace t}{\eps} \Longrightarrow \sqrt{\ttC}B_t \qquad \text{in the space $\scC([0,\infty);\BR^\dd)$,}
\end{equation*}
where $B_t$ is a $\dd$-dimensional Brownian motion.
\end{theorem}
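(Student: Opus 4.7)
My plan is a corrector-based martingale decomposition followed by an application of the functional martingale central limit theorem ({\sc clt}). Let $\Phi = (\Phi_1, \dots, \Phi_\dd)$ with each $\Phi_k$ solving the cell problem \eqref{E:cell-problem-i}, as supplied by Corollary \ref{C:cell}. A short calculation shows that the generator $\gen^\eps$ of $(X^\eps_t, I^\eps_t)$, applied to $g_k(x,\alpha) \triangleq \eps\Phi_k(x/\eps, \alpha)$, equals $\tfrac{1}{\eps}(\gen \Phi_k)(x/\eps,\alpha) = \tfrac{1}{\eps}[b_k(x/\eps,\alpha) - \overline{b}_k]$: the $\eps^{-2}$ prefactor on the switching rates $q^\eps_{\alpha\beta}$ exactly cancels the $\eps$ in $g_k$ to recover the $Q\Phi_k$ piece of $\gen\Phi_k$. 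Applying the Ito formula for switching diffusions to $\eps\Phi_k(X^\eps_t/\eps, I^\eps_t)$ and combining with equation \eqref{E:sw-sde-macro} for $X^{\eps,k}_t$, I obtain
\begin{equation*}
X^{\eps,k}_t - \tfrac{\overline{b}_k\, t}{\eps} \;=\; R^{\eps,k}_t + N^{\eps,k}_t,
\end{equation*}
where $R^{\eps,k}_t$ collects boundary terms of the form $\eps\Phi_k(X^\eps_t/\eps, I^\eps_t)$ and is uniformly $O(\eps)$ (since $\Phi$ is bounded on $\BT^\dd \times \II$), and $N^{\eps,k}_t$ is a square-integrable martingale.

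Next I split $N^{\eps,k}_t = N^{\eps,k,c}_t - J^{\eps,k}_t$ into its continuous and purely discontinuous parts. The continuous part is
\begin{equation*}
N^{\eps,k,c}_t = \int_0^t \bigl[(I_\dd - D\Phi)\sigma\bigr]_{kl}(X^\eps_s/\eps, I^\eps_s)\, dW^l_s,
\end{equation*}
while $J^{\eps,k}_t$ is the compensated Poisson integral, with respect to $\widetilde{\NN}_p$, of the increment $\eps\bigl[\Phi_k(X^\eps_{s-}/\eps, I^\eps_{s-} + h^\eps) - \Phi_k(X^\eps_{s-}/\eps, I^\eps_{s-})\bigr]$. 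Orthogonality of these two martingales, together with the explicit form of the compensator of $\NN_p$, yields
\begin{equation*}
\langle N^{\eps,k}, N^{\eps,l}\rangle_t = \int_0^t \Bigl\{\bigl[(I_\dd - D\Phi)\, a\, (I_\dd - D\Phi)^\intercal\bigr]_{kl} + \scQ(\Phi_k,\Phi_l)\Bigr\}\bigl(X^\eps_s/\eps, I^\eps_s\bigr)\,ds,
\end{equation*}
where the carré-du-champ contribution emerges because the $\eps^2$ from the squared jumps of $\eps\Phi_k$ exactly balances the $\eps^{-2}$ in the jump intensities.

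To identify the limit of this quadratic variation I will establish ergodicity of the toroidal fast process $(\pi(X_t), I_t)$ on $\BT^\dd \times \II$ with invariant probability measure $m(x,\alpha)\,dx$ obtained from Proposition \ref{P:Fredholm}; uniform ellipticity together with irreducibility of $Q$ yields a Doeblin-type minorization on the compact state space and hence exponential mixing. Under the time change $s = \eps^2 u$ we have $\int_0^t f(X^\eps_s/\eps, I^\eps_s)\,ds = \eps^2 \int_0^{t/\eps^2} f(X_u, I_u)\, du$, so Birkhoff's ergodic theorem gives, for each fixed $t>0$,
\begin{equation*}
\langle N^{\eps,k}, N^{\eps,l}\rangle_t \;\xrightarrow[\eps \to 0]{\text{a.s.}}\; \ttC_{kl}\, t.
\end{equation*}
Since the jumps of $N^\eps$ are bounded by $2\eps \sup|\Phi|$, hence asymptotically negligible, the functional martingale {\sc clt} (e.g., \cite[Theorem~7.1.4]{EK86}) yields $N^\eps \Longrightarrow \sqrt{\ttC}\, B$ in $\scD([0,\infty);\BR^\dd)$; the limit has continuous paths, so convergence holds in $\scC([0,\infty); \BR^\dd)$. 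Combined with $\sup_{s \le t}|R^\eps_s| = O(\eps)$, this yields the theorem.

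The main obstacle I expect is the handling of the jump martingale $J^\eps$, which is absent in the classical diffusion-homogenization literature: tracking the delicate interplay between the $\eps$-prefactor on the corrector, the $\eps^2$ produced by squared jumps, and the $\eps^{-2}$ scaling of the switching rates is precisely what generates the new $\scQ(\Phi,\Phi)$ contribution to $\ttC$. A secondary technical point is upgrading the pointwise ergodic convergence of $\langle N^\eps\rangle_t$ to the mode required by the functional {\sc clt}, namely convergence uniformly on compacts in probability; this should follow from uniform-in-$\eps$ $L^1$ bounds on the integrands combined with standard interpolation.
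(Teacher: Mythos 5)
Your proposal follows essentially the same corrector-plus-martingale-CLT strategy as the paper: the martingale decomposition, the explicit computation of the cross-variation $\langle N^{\eps,k}, N^{\eps,l}\rangle_t$ isolating the $\scQ(\Phi_k,\Phi_l)$ contribution, and the invocation of \cite[Thm.~7.1.4]{EK86} all match. The one genuinely different ingredient is your treatment of ergodicity. The paper proves (Lemma~\ref{L:ergodicity}) that $\int_0^t g(X^\eps_s/\eps,I^\eps_s)\,ds \to \bar g\,t$ in $L^2$ by solving a second Poisson equation $\gen f = g - \bar g$ via Proposition~\ref{P:Fredholm}, applying It\^o to $\eps f(x/\eps,\alpha)$, and bounding the resulting martingale terms; this recycles exactly the Fredholm machinery already needed for the cell problem and is self-contained. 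You instead propose a Doeblin-type minorization of the toroidal process to get Harris recurrence and then a pointwise ergodic theorem under time change. That route is valid in principle and yields a.s.\ convergence for each fixed $t$, which is more than the in-probability convergence the martingale CLT requires, but it carries two costs. First, establishing the minorization (a uniform lower bound on the transition density of the two-component process $(\pi(X_t),I_t)$ in finite time) is a nontrivial piece of work of its own, whereas the paper gets ergodicity essentially for free from the Fredholm alternative. Second, ``Birkhoff's ergodic theorem'' is not quite the right citation: the fast process starts from a fixed $(\pi(x_0),\alpha_0)$ rather than from stationarity, so what you need is the ergodic theorem for Harris/positive-recurrent Markov processes, which itself rests on the minorization you would have to prove. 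Your worry about upgrading to ``convergence uniformly on compacts in probability'' is unfounded: both \cite[Thm.~7.1.4]{EK86} and the paper's Proposition~\ref{P:martclt} only require convergence in probability at each fixed $t$. One small slip at the end: $N^\eps$ itself has jumps and only converges in $\scD$; the upgrade to $\scC$-convergence via \cite[Problem~3.11.25]{EK86} must be applied to the continuous process $X^\eps_t - \overline{b}\,t/\eps$ (after first disposing of the $O(\eps)$ remainder via \cite[Cor.~3.3.3]{EK86}), not to $N^\eps$ directly.
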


\begin{remark}\label{R:Interpretation}
If we compare the expression for the limiting diffusion matrix $\ttC$ defined by \eqref{E:limcov} in Theorem \ref{T:Main} above with the classical counterpart for a single {\sc sde} (see equation (4.3.10) in \cite{BLP}), a couple of things become apparent. First, each component $\Phi_k$ of the solution to the cell problem in our study now solves a system of, rather than a single, {\sc pde}, and, not surprisingly, one also has a sum over $\alpha$ when averaging with respect to the invariant density $m(x,\alpha)$. Second, and more interesting, is the appearance of the new term $\scQ(\Phi,\Phi)$ in the integrand on the right-hand side of the first line in \eqref{E:limcov}. The matrix $\sum_{\alpha=1}^\mm \int_{\BT^\dd} \scQ(\Phi,\Phi)(x,\alpha) \thinspace m(x,\alpha) \thinspace dx$ gives the extra contribution of the switching to the limiting diffusion  matrix $\ttC$, alluded to in Section \ref{S:Introduction}.
\end{remark}

\begin{remark}\label{R:Comparison}
We comment briefly on the relation of the present work with that in \cite{PeletierSchlottke_EJP2024}, whose setting has significant overlap with, but is not identical to, the one considered here. Indeed, while the continuous component in \cite{PeletierSchlottke_EJP2024} is allowed to take values in somewhat more general spaces, the discussion there for the Euclidean setting seems to be restricted to the case of constant diffusion coefficients. In contrast, we entirely restrict attention to the Euclidean setting, but on the other hand, allow fairly general \textit{variable} diffusion coefficients. In terms of results, the calculations in \cite{PeletierSchlottke_EJP2024} use an Eulerian scaling and give the exponential asymptotics of the probabilities of \textit{rare} events, i.e., $\BP\left(\eps X^\eps_{\cdot} \in B\right)$ with $B$ a subset of path space, asserting along the way the convergence of $\eps X^\eps_t$ to $\overline{b}\thinspace t$ in the limit as $\eps \searrow 0$, the latter admitting an interpretation as a law of large numbers ({\sc lln}). The present work uses a diffusive scaling with our main result Theorem \ref{T:Main} asserting, very roughly, that $\eps X^\eps_t \approx \overline{b}\thinspace t + \eps \sqrt{\ttC}B_t$, thus quantifying behavior of \textit{typical} fluctuations in the spirit of the {\sc clt}.
\end{remark}

Our thoughts are organized as follows. The principal tool for proving Theorem \ref{T:Main} is the Fredholm alternative stated in Proposition \ref{P:Fredholm} and its immediate Corollary \ref{C:cell}. Since Proposition \ref{P:Fredholm} is a purely \textit{analytic} result (at least overtly), we defer its proof to Section \ref{S:Fredholm}, focusing instead in Section \ref{S:Homogenization} on proving our main \textit{probabilistic} result Theorem \ref{T:Main} using these analytic tools.

% Proof of main result (Homogenization)
\section{Homogenization}\label{S:Homogenization}
In this section, we prove Theorem \ref{T:Main} with the help of Proposition \ref{P:Fredholm} (and Corollary \ref{C:cell}) whose proof is provided in Section \ref{S:Fredholm}. Owing to its centrality in our calculations, we start by stating the Ito formula \eqref{E:Ito-macro} which describes the action of smooth functions $f(x,\alpha)$ on the process $(X^\eps_t,I^\eps_t)$ solving \eqref{E:sde-prm-macro}, in terms of the generator \eqref{E:macro-gen} and associated martingales. After expressing our process of interest $X^\eps_t - \overline{b} \thinspace t/\eps$ as a small perturbation of a vector martingale $M^\eps_t$ in Lemma \ref{L:mart-decomp}, we compute in Lemma \ref{L:CV} the cross-variation processes associated with the components of $M^\eps_t$. This sets the stage for use of the martingale {\sc clt}, (an adaptation of) which is stated for reference in Proposition \ref{P:martclt}. Carrying out this program requires the ergodicity result Lemma \ref{L:ergodicity} which enables us to approximate, in an $L^2$ sense, time integrals of functions of $(\frac{X^\eps_t}{\eps},I^\eps_t)$ with spatial averages of these functions with respect to the invariant density $m(x,\alpha)$ in Proposition \ref{P:Fredholm}. With all the pieces in place, we close out the section by providing the proof of Theorem \ref{T:Main}. 

The generator $\gen^\eps$ of the process $(X^\eps_t,I^\eps_t)$ is an operator whose action on functions $f:\BR^\dd \times \II \to \BR$ which are $C^2$ in $x$ for every $\alpha \in \II$, is given by the function $\gen^\eps f:\BR^\dd \times \II \to \BR$ defined according to
\begin{multline}\label{E:macro-gen}
\gen^\eps f(x,\alpha) \triangleq \frac{1}{\eps}\sum_{i=1}^\dd b_i\left(\frac{x}{\eps},\alpha\right)\frac{\partial f}{\partial x_i}(x,\alpha) + \frac{1}{2}\sum_{i,j=1}^\dd a_{ij}\left(\frac{x}{\eps},\alpha\right) \frac{\partial^2 f}{\partial x_i \partial x_j}(x,\alpha)\\ + \frac{1}{\eps^2}\sum_{\beta=1}^\mm q_{\alpha\beta}\left(\frac{x}{\eps}\right) \left[f(x,\beta)-f(x,\alpha)\right].
\end{multline}
One can now see from the Ito formula \cite[Theorem II.5.1]{IkedaWatanabe} that for $f$ as above, we have
\begin{equation}\label{E:Ito-macro}
\begin{aligned}
f(X^\eps_t,I^\eps_t) &= f(X^\eps_0,I^\eps_0) + \int_0^t (\gen^\eps f)(X^\eps_s,I^\eps_s) \thinspace ds + \ttM^{1,\eps,f}_t + \ttM^{2,\eps,f}_t, \qquad \text{where}\\
\ttM^{1,\eps,f}_t &\triangleq \sum_{i=1}^\dd \sum_{j=1}^\rr \int_0^t \frac{\partial f}{\partial x_i}(X^\eps_s,I^\eps_s) \sigma_{ij}\left(\frac{X^\eps_s}{\eps},I^\eps_s\right) \thinspace dW^{(j)}_s, \qquad \text{and}\\
\ttM^{2,\eps,f}_t &\triangleq \int_0^{t+} \int_{\BR}\left\{f\left(X^\eps_{s-},I^\eps_{s-}+h^\eps(X^\eps_{s-},I^\eps_{s-},z)\right) - f\left(X^\eps_{s-},I^\eps_{s-}\right) \right\} \widetilde{\NN}_p(ds,dz),
\end{aligned}
\end{equation} 
where $\widetilde{\NN}_p(ds,dz)$ is the compensated {\sc prm} defined in \eqref{E:cprm}.
We will make extensive use of \eqref{E:Ito-macro} with the function $f(x,\alpha)$ frequently taken to be the smooth solution of a (judiciously chosen) system of {\sc pde}. Indeed, our very first step is to use the solution to the cell problem in Corollary \ref{C:cell} with the Ito formula to obtain a martingale decomposition of the process $X^\eps_t - \overline{b} \thinspace t/\eps$ of interest.

\begin{lemma}\label{L:mart-decomp}
Let $\Phi(x,\alpha)\triangleq (\Phi_1(x,\alpha),\dots,\Phi_\dd(x,\alpha))$ where for $1 \le k \le \dd$, $\Phi_k(x,\alpha)$ solves the cell problem \eqref{E:cell-problem-i}. Set $\Phi^\eps_k(x,\alpha) \triangleq \eps \thinspace\Phi_k\left(\frac{x}{\eps},\alpha\right)$ for $1 \le k \le \dd$. Then, we have
\begin{equation}\label{E:mart-decomp-1}% Decomposing process of interest in terms of martingales
X^{\eps}_t - \frac{\overline{b}\thinspace t}{\eps} = X^{\eps}_0 + \eps \left\{\Phi \left(\frac{X^\eps_t}{\eps},I^\eps_t\right) - \Phi \left(\frac{X^\eps_0}{\eps},I^\eps_0\right)\right\} + M^\eps_t, 
\end{equation}
where the process $M^\eps_t \triangleq (M^{\eps,1}_t,\dots,M^{\eps,\dd}_t)$ is a square integrable martingale with sample paths in $\scD([0,\infty);\BR^\dd)$ and components
\begin{equation}\label{E:cons-mart} % consolidated martingale
%\begin{aligned} 
M^{\eps,k}_t \triangleq \widetilde{\ttM}^{1,\eps,\Phi^\eps_k}_t - \ttM^{2,\eps,\Phi^\eps_k}_t, \quad
\text{where} \quad 
\widetilde{\ttM}^{1,\eps,\Phi^\eps_k}_t \triangleq \sum_{j=1}^\rr \int_0^t \left[(I_\dd-D\Phi)\thinspace \sigma\right]_{kj}\left(\frac{X^\eps_s}{\eps},I^\eps_s\right) dW^{(j)}_s.
%\end{aligned}
\end{equation}
\end{lemma}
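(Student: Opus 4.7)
The plan is to apply the Ito formula \eqref{E:Ito-macro} to the rescaled corrector $\Phi^\eps_k(x,\alpha) \triangleq \eps \thinspace\Phi_k(x/\eps,\alpha)$ for each $1 \le k \le \dd$, and then combine the result with the $k$-th component of the integrated {\sc sde} \eqref{E:sde-prm-macro} so that the singular drift of order $1/\eps$ cancels, leaving the desired constant drift $\overline{b}_k/\eps$ and a sum of martingale terms.

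First I would compute the action of $\gen^\eps$ on $\Phi^\eps_k$. A direct chain-rule calculation using
$\partial_{x_i} \Phi^\eps_k(x,\alpha) = (\partial_{x_i}\Phi_k)(x/\eps,\alpha)$,
$\partial^2_{x_i x_j} \Phi^\eps_k(x,\alpha) = \eps^{-1}(\partial^2_{x_i x_j}\Phi_k)(x/\eps,\alpha)$, and
$\Phi^\eps_k(x,\beta)-\Phi^\eps_k(x,\alpha) = \eps[\Phi_k(x/\eps,\beta)-\Phi_k(x/\eps,\alpha)]$
yields $(\gen^\eps \Phi^\eps_k)(x,\alpha) = \eps^{-1}(\gen \Phi_k)(x/\eps,\alpha)$. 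By the cell problem \eqref{E:cell-problem-i}, this equals $\eps^{-1}[b_k(x/\eps,\alpha)-\overline{b}_k]$. Substituting into \eqref{E:Ito-macro} and rearranging gives
\begin{equation*}
\int_0^t \frac{1}{\eps}\bigl[b_k(X^\eps_s/\eps,I^\eps_s)-\overline{b}_k\bigr] \thinspace ds = \eps\bigl[\Phi_k(X^\eps_t/\eps,I^\eps_t)-\Phi_k(X^\eps_0/\eps,I^\eps_0)\bigr] - \ttM^{1,\eps,\Phi^\eps_k}_t - \ttM^{2,\eps,\Phi^\eps_k}_t.
\end{equation*}
On the other hand, the $k$-th component of \eqref{E:sde-prm-macro} in integrated form reads
$X^{\eps,k}_t = X^{\eps,k}_0 + \int_0^t \eps^{-1} b_k(X^\eps_s/\eps,I^\eps_s) \thinspace ds + \sum_{j=1}^\rr \int_0^t \sigma_{kj}(X^\eps_s/\eps,I^\eps_s) \thinspace dW^{(j)}_s$.
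Subtracting the previous display from this cancels the $\eps^{-1} b_k$ drift, leaves $\overline{b}_k t/\eps$ on the left, and produces on the right the boundary term in $\Phi_k$ together with a Brownian integral plus $- \ttM^{1,\eps,\Phi^\eps_k}_t - \ttM^{2,\eps,\Phi^\eps_k}_t$. Since the integrand of $\ttM^{1,\eps,\Phi^\eps_k}_t$ is exactly the $k$-th row of $(D\Phi)\sigma$, the two Brownian integrals merge into one with integrand $[(I_\dd-D\Phi)\sigma]_{kj}$, matching $\widetilde{\ttM}^{1,\eps,\Phi^\eps_k}_t$ in \eqref{E:cons-mart}.

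Square integrability and the martingale property of $M^{\eps,k}_t$ follow from uniform bounds: by Corollary \ref{C:cell} each $\Phi_k$ lies in $C^2(\BT^\dd \times \II)$, hence $\Phi_k$ and $D\Phi_k$ are bounded; by Assumptions \ref{A:DD}--\ref{A:Periodicity} so are $\sigma$ and the jump intensities $q_{\alpha\beta}$. Therefore the Brownian integrand $(I_\dd - D\Phi)\sigma$ is bounded, and the compensated jump integrand $\Phi^\eps_k(x,\alpha+h^\eps(x,\alpha,z)) - \Phi^\eps_k(x,\alpha) = \eps[\Phi_k(x/\eps,\alpha+h^\eps(x,\alpha,z)) - \Phi_k(x/\eps,\alpha)]$ is bounded while its support in $z$ has Lebesgue measure dominated by $\mm \bar q/\eps^2$; both quadratic-variation rates are thus bounded on compact time intervals, and standard Ito isometry arguments give the claim. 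The proof is essentially mechanical—the only place requiring attention is the bookkeeping of $\eps$-powers in $\gen^\eps \Phi^\eps_k$, since it is precisely the cancellation of these powers that aligns with the cell problem and produces the correct compensating drift.
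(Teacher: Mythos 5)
Your argument is correct and follows essentially the same route as the paper's proof: apply the It\^o formula \eqref{E:Ito-macro} to the rescaled corrector $\Phi^\eps_k$, use the scaling identity $\gen^\eps\Phi^\eps_k(x,\alpha)=\eps^{-1}(\gen\Phi_k)(x/\eps,\alpha)$ together with the cell problem \eqref{E:cell-problem-i} to express the $O(\eps^{-1})$ drift in \eqref{E:sde-prm-macro}, and recombine the two Brownian integrals into $\widetilde{\ttM}^{1,\eps,\Phi^\eps_k}_t$. Your explicit verification of square-integrability of the jump martingale via the bound on the $z$-support is a useful detail that the paper leaves implicit.
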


\begin{proof}
We start by setting $\Phi^\eps_k(x,\alpha) \triangleq \eps \thinspace\Phi_k\left(\frac{x}{\eps},\alpha\right)$ and note that $(\gen^\eps \Phi^\eps_k)(x,\alpha) = \frac{1}{\eps} (\gen \Phi_k)\left(\frac{x}{\eps},\alpha\right)$. By Ito's formula \eqref{E:Ito-macro}, we get 
%\begin{equation*}
$\Phi^\eps_k(X^\eps_t,I^\eps_t) = \Phi^\eps_k(X^\eps_0,I^\eps_0) + \frac{1}{\eps}\int_0^t (\gen \Phi_k)\left(\frac{X^\eps_s}{\eps},I^\eps_s\right) \thinspace ds + \ttM^{1,\eps,\Phi^\eps_k}_t + \ttM^{2,\eps,\Phi^\eps_k}_t$ 
%\end{equation*}
where $\ttM^{1,\eps,\Phi^\eps_k}_t$, $\ttM^{2,\eps,\Phi^\eps_k}_t$ are computed as in \eqref{E:Ito-macro}. Using \eqref{E:cell-problem-i} and the {\sc sde} \eqref{E:sde-prm-macro}, we easily get the stated result.
\end{proof}

We next compute the cross-variation processes associated with the martingales $M^{\eps,k}_t$, $1 \le k \le \dd$. 

\begin{lemma}\label{L:CV} % CV = cross-variation processes
Let $\Phi(x,\alpha)\triangleq (\Phi_1(x,\alpha),\dots,\Phi_\dd(x,\alpha))$ be as in Lemma \ref{L:mart-decomp}. 
For $1 \le k,l \le \dd$, we have
\begin{equation}\label{E:CV}
\la M^{\eps,k},M^{\eps,l} \ra_t = \int_0^t \left((I_\dd-D\Phi) \thinspace a \thinspace (I_\dd-D\Phi)^\intercal\right)_{kl}\left(\frac{X^\eps_s}{\eps},I^\eps_s\right) \thinspace ds + \int_0^t \scQ(\Phi_k,\Phi_l)\left(\frac{X^\eps_s}{\eps},I^\eps_s\right) \thinspace ds,
\end{equation}
with $\scQ(\Phi_k,\Phi_l)$ computed as in \eqref{E:cdc}.
\end{lemma}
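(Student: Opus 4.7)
The plan is to exploit the bilinearity of cross-variation, writing $M^{\eps,k} = \widetilde{\ttM}^{1,\eps,\Phi^\eps_k} - \ttM^{2,\eps,\Phi^\eps_k}$, so that
\begin{equation*}
\la M^{\eps,k},M^{\eps,l}\ra_t = \la \widetilde{\ttM}^{1,\eps,\Phi^\eps_k},\widetilde{\ttM}^{1,\eps,\Phi^\eps_l}\ra_t + \la \ttM^{2,\eps,\Phi^\eps_k},\ttM^{2,\eps,\Phi^\eps_l}\ra_t - \text{(cross terms)}.
\end{equation*}
The cross terms should vanish: $\widetilde{\ttM}^{1}$ is a continuous martingale driven by $W$, while $\ttM^{2}$ is a purely discontinuous martingale driven by the compensated {\sc prm} $\widetilde{\NN}_p$, and since $W$ and $p$ are independent under $\BP$, the two are orthogonal. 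I would cite this from \cite{IkedaWatanabe} rather than reprove it.

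For the continuous piece, I would apply the standard formula for cross-variation of Ito integrals with respect to an $\rr$-dimensional Brownian motion. Using the form of $\widetilde{\ttM}^{1,\eps,\Phi^\eps_k}$ in \eqref{E:cons-mart}, this yields
\begin{equation*}
\la \widetilde{\ttM}^{1,\eps,\Phi^\eps_k},\widetilde{\ttM}^{1,\eps,\Phi^\eps_l}\ra_t = \int_0^t \sum_{j=1}^\rr [(I_\dd-D\Phi)\sigma]_{kj} [(I_\dd-D\Phi)\sigma]_{lj} \Bigl(\tfrac{X^\eps_s}{\eps},I^\eps_s\Bigr) \thinspace ds,
\end{equation*}
and recognizing the inner sum as the $(k,l)$-entry of $(I_\dd-D\Phi)\thinspace a \thinspace (I_\dd-D\Phi)^\intercal$ gives the first integral on the right-hand side of \eqref{E:CV}.

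For the jump piece, the approach is to recall (see \cite[Chapter II]{IkedaWatanabe}) that the predictable quadratic covariation of two stochastic integrals with respect to the compensated {\sc prm} $\widetilde{\NN}_p$ is obtained by integrating the product of the integrands against $ds \, \Leb(dz)$. Applied to $\ttM^{2,\eps,\Phi^\eps_k}$ and $\ttM^{2,\eps,\Phi^\eps_l}$, this gives an integral in $s$ of
\begin{equation*}
\int_\BR \Delta_k^\eps(X^\eps_s,I^\eps_s,z) \thinspace \Delta_l^\eps(X^\eps_s,I^\eps_s,z) \thinspace \Leb(dz), \qquad \Delta_k^\eps(x,\alpha,z)\triangleq \Phi^\eps_k(x,\alpha+h^\eps(x,\alpha,z))-\Phi^\eps_k(x,\alpha).
\end{equation*}
The key simplification uses the structure of $h^\eps$ in \eqref{E:h-eps}: the sets $\ttJ^\eps_{\alpha\beta}(x)$ are disjoint as $\beta$ varies (for fixed $\alpha,x$), so on $\ttJ^\eps_{\alpha\beta}(x)$ we have $\alpha+h^\eps(x,\alpha,z)=\beta$, and the $z$-integral collapses to a sum over $\beta\neq\alpha$ with weight $|\ttJ^\eps_{\alpha\beta}(x)|=q^\eps_{\alpha\beta}(x)$. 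This yields
\begin{equation*}
\sum_{\beta\neq\alpha} q^\eps_{\alpha\beta}(x) \bigl[\Phi^\eps_k(x,\beta)-\Phi^\eps_k(x,\alpha)\bigr]\bigl[\Phi^\eps_l(x,\beta)-\Phi^\eps_l(x,\alpha)\bigr].
\end{equation*}
Plugging in $\Phi^\eps_k(x,\alpha)=\eps \thinspace \Phi_k(x/\eps,\alpha)$ and $q^\eps_{\alpha\beta}(x)=\eps^{-2}q_{\alpha\beta}(x/\eps)$ via \eqref{E:sw-int-macro}, the $\eps^{-2}$ and $\eps^2$ factors cancel exactly, leaving the unscaled expression $\scQ(\Phi_k,\Phi_l)(x/\eps,\alpha)$ as defined in \eqref{E:cdc}. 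This is the desired second integral in \eqref{E:CV}.

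The only real subtlety is the precise bookkeeping of the $\eps$-scalings between the cell-problem solution $\Phi_k$ and its rescaling $\Phi^\eps_k$, in tandem with the corresponding rescaling of the intensities; the fact that the cancellation is exact (and leaves no residual $\eps$-dependence in the integrand of the jump piece) is precisely why the carr\'e-du-champ term contributes at leading order to the limiting diffusivity $\ttC$. All other steps — the orthogonality of $\widetilde{\ttM}^1$ and $\ttM^2$, and the two quadratic-variation calculations — are standard applications of the stochastic calculus for {\sc sde} driven by Brownian motion and compensated {\sc prm}.
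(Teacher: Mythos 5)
Your proof is correct and takes essentially the same route as the paper: split $M^{\eps,k}$ into its Brownian and compensated-jump martingale parts, compute the two quadratic covariations separately, and use the exact cancellation between the $\eps$-scaling of $\Phi^\eps_k$ and the $\eps^{-2}$-scaling of $q^\eps_{\alpha\beta}$ to identify the jump integrand as $\scQ(\Phi_k,\Phi_l)(x/\eps,\alpha)$. The only organizational difference is that the paper disposes of the cross term by an explicit Ito computation showing $\widetilde{\ttM}^{1,\eps,\Phi^\eps_k}_t \thinspace \ttM^{2,\eps,\Phi^\eps_k}_t$ is a martingale and then polarizes to obtain $\la M^{\eps,k},M^{\eps,l}\ra$, whereas you use bilinearity of $\la\cdot,\cdot\ra$ directly and cite the orthogonality of a continuous and a purely discontinuous local martingale; both are standard and arrive at the same three pieces.
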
 

\begin{proof}
For $1 \le k \le \dd$, $(x,\alpha) \in \BR^\dd \times \II$, $z \in \BR$, set 
\begin{equation}\label{E:varDelta}
\varDelta \Phi^\eps_k(x,\alpha,z) \triangleq \Phi^\eps_k\left(x,\alpha+h^\eps(x,\alpha,z)\right) - \Phi^\eps_k\left(x,\alpha\right) \quad \text{where  $\Phi^\eps_k(x,\alpha) \triangleq \eps \thinspace\Phi_k\left(\frac{x}{\eps},\alpha\right)$.}
\end{equation}
We first compute $\la M^{\eps,k} \ra_t$. Note that $\left(M^{\eps,k}_t\right)^2 = \left(\widetilde{\ttM}^{1,\eps,\Phi^\eps_k}_t\right)^2 + \left(\ttM^{2,\eps,\Phi^\eps_k}_t\right)^2 - 2\thinspace \widetilde{\ttM}^{1,\eps,\Phi^\eps_k}_t \cdot \ttM^{2,\eps,\Phi^\eps_k}_t$. 
We claim that
\begin{multline}\label{E:M_12}
\la \widetilde{\ttM}^{1,\eps,\Phi^\eps_k} \ra_t = \int_0^t \left[(I-D\Phi)\thinspace a \thinspace (I-D\Phi)^\intercal\right]_{kk} \left(\frac{X^\eps_s}{\eps},I^\eps_s\right) \thinspace ds \quad \text{and}\\ 
\la \ttM^{2,\eps,\Phi^\eps_k} \ra_t = \int_0^t \int_{\BR} \left(\varDelta \Phi^\eps_k(X^\eps_{s-},I^\eps_{s-},z)\right)^2  dz \thinspace ds.
\end{multline}
The first assertion in \eqref{E:M_12} follows from standard results on stochastic integrals with respect to Brownian motion, while the second follows by noting that $\ttM^{2,\eps,\Phi^\eps_k}_t = \int_0^{t+}\int_{\BR} \varDelta \Phi^\eps_k\left(X^\eps_{s-},I^\eps_{s-},z\right)\widetilde{\NN}_p(ds,dz)$ and using results from \cite[Section II.3]{IkedaWatanabe}. Next, an application of the Ito formula \cite[Theorem II.5.1]{IkedaWatanabe} to $f(\widetilde{\ttM}^{1,\eps,\Phi^\eps_k}_t,\ttM^{2,\eps,\Phi^\eps_k}_t)$ with $f(x_1,x_2) \triangleq x_1\thinspace x_2$ yields that
\begin{equation}\label{E:M12-product}
\widetilde{\ttM}^{1,\eps,\Phi^\eps_k}_t \cdot \ttM^{2,\eps,\Phi^\eps_k}_t = \int_0^t \ttM^{2,\eps,\Phi^\eps_k}_s \thinspace d\widetilde{\ttM}^{1,\eps,\Phi^\eps_k}_s + \int_0^{t+}\int_{\BR} \widetilde{\ttM}^{1,\eps,\Phi^\eps_k}_{s-} \varDelta \Phi^\eps_k(X^\eps_{s-},I^\eps_{s-}z) \thinspace \widetilde{\NN}_p(ds,dz) \quad \text{is a martingale.}
\end{equation}
It now follows from \eqref{E:M_12} and \eqref{E:M12-product} that 
\begin{equation*}%\label{E:qv-generic} % generic quadratic variation
\la M^{\eps,k} \ra_t = \int_0^t \left[(I-D\Phi)\thinspace a \thinspace (I-D\Phi)^\intercal\right]_{kk} \left(\frac{X^\eps_s}{\eps},I^\eps_s\right) \thinspace ds + \int_0^t \int_{\BR} \left(\varDelta \Phi^\eps_k(X^\eps_{s-},I^\eps_{s-},z)\right)^2  dz \thinspace ds.
\end{equation*}
Noting that for $1 \le k,l \le \dd$, we have 
%\begin{equation*}
$\la M^{\eps,k},M^{\eps,l} \ra_t \triangleq \frac{1}{4}\left(\la M^{\eps,k} + M^{\eps,l}\ra_t - \la M^{\eps,k} - M^{\eps,l}\ra_t\right)$, we get
%\end{equation*}
%\begin{multline*}
\begin{multline} \label{E:cv}
\la M^{\eps,k},M^{\eps,l} \ra_t = \int_0^t \left((I_\dd-D\Phi) \thinspace a \thinspace (I_\dd-D\Phi)^\intercal\right)_{kl}\left(\frac{X^\eps_s}{\eps},I^\eps_s\right) \thinspace ds\\ + \int_0^t \int_{\BR} \varDelta \Phi^\eps_k(X^\eps_{s-},I^\eps_{s-},z) \thinspace \varDelta \Phi^\eps_l(X^\eps_{s-},I^\eps_{s-},z) \thinspace dz \thinspace ds.
\end{multline}
%\end{multline*}
Note that the rightmost term in \eqref{E:cv} can be expressed as
%\begin{equation*}
$\int_0^t \Theta^\eps_{kl}(X^\eps_{s-},I^\eps_{s-})\thinspace ds$ where we have defined $\Theta^\eps_{kl}(x,\alpha) \triangleq \int_{\BR} \varDelta \Phi^\eps_k(x,\alpha,z) \varDelta \Phi^\eps_l(x,\alpha,z) \thinspace dz$.
%\end{equation*}
To evaluate this, we use \eqref{E:varDelta} in conjunction with the expression for $h^\eps(x,\alpha,z)$ in \eqref{E:h-eps} to get
%\begin{equation*}
%\begin{aligned}
$\Theta^\eps_{kl}(x,\alpha) = \sum_{\beta=1}^\mm \int_{\ttJ^\eps_{\alpha\beta}(x)} \left\{\Phi^\eps_k(x,\beta)-\Phi^\eps_k(x,\alpha)\right\}\left\{\Phi^\eps_l(x,\beta)-\Phi^\eps_l(x,\alpha)\right\} \thinspace dz\\
= \sum_{\beta=1}^\mm q^\eps_{\alpha\beta}(x)\left\{\Phi^\eps_k(x,\beta)-\Phi^\eps_k(x,\alpha)\right\}\left\{\Phi^\eps_l(x,\beta)-\Phi^\eps_l(x,\alpha)\right\}$.
%\end{aligned}
%\end{equation*}
Recalling the expression for $q^\eps_{\alpha\beta}(x)$ from \eqref{E:sw-int-macro}, and using the second equation in \eqref{E:varDelta}, we get that $\Theta^\eps_{kl}(x,\alpha) = \scQ(\Phi_k,\Phi_l)(\frac{x}{\eps},\alpha)$. Thus, the second term on the right-hand side of \eqref{E:cv} can be written $\int_0^t \scQ(\Phi_k,\Phi_l)\left(\frac{X^\eps_{s-}}{\eps},I^\eps_{s-}\right) \thinspace ds$. Noting that $X^\eps_{s-}=X^\eps_s$, $I^\eps_{s-}=I^\eps_s$ for a.e. $s$, we obtain
\begin{equation}\label{E:qcv-M2}
\int_0^t \int_{\BR} \varDelta \Phi^\eps_k(X^\eps_{s-},I^\eps_{s-},z) \thinspace \varDelta \Phi^\eps_l(X^\eps_{s-},I^\eps_{s-},z) \thinspace dz \thinspace ds=\int_0^t \scQ(\Phi_k,\Phi_l)\left(\frac{X^\eps_s}{\eps},I^\eps_s\right) \thinspace ds.
\end{equation} 
Putting together \eqref{E:cv} and \eqref{E:qcv-M2}, we now get the stated result.
\end{proof}

%\newpage

A key role in the proof of Theorem \ref{T:Main} will be played by the \textit{martingale central limit theorem}. A version tailored to our problem, adapted from Theorems 7.1.1 and 7.1.4 in \cite{EK86}, is stated below.

\begin{proposition}[Martingale {\sc clt}]\label{P:martclt}
Let $(\Omega,\filt,\BP)$ be a probability space equipped with a family of filtrations $\{\filt^\eps_t:t \ge 0\}$, $\eps \in (0,1)$, supporting a family of processes $\{M^\eps_t:t \ge 0\}$, $\eps \in (0,1)$, where each $M^\eps_t=(M^{\eps,1}_t,\dots,M^{\eps,\dd}_t)$ is an $\BR^\dd$-valued square-integrable $\filt^\eps_t$-martingale\footnote{Thus, each component $M^{\eps,k}_t$ is a square-integrable $\filt^\eps_t$-martingale.} with sample paths in $\scD([0,\infty);\BR^\dd)$ satisfying $M^\eps_0=0$. 
Let $A^\eps=\{A^\eps_t=[A^{\eps,kl}_t]_{1 \le k,l \le \dd}:t \ge 0\}$ be symmetric $\dd \times \dd$ matrix-valued processes such that $A^{\eps,kl}_t$ has sample paths in $\scD([0,\infty);\BR)$ and $A^\eps_t-A^\eps_s$ is nonnegative-definite for $t>s \ge 0$. Suppose that for each $T>0$, $1 \le k,l \le \dd$, we have
\begin{equation}\label{E:mart-clt-tc} % martingale clt technical conditions
\lim_{\eps \searrow 0} \BE\left[\sup_{0 \le t \le T} \left|A^{\eps,kl}_t - A^{\eps,kl}_{t-}\right| \right]=0, \quad \text{and} \quad 
\lim_{\eps \searrow 0} \BE\left[\sup_{0 \le t \le T} \left|M^\eps_t - M^\eps_{t-}\right|^2 \right]=0,
\end{equation}
and
\begin{equation*}
M^{\eps,k}_t \cdot M^{\eps,l}_t - A^{\eps,kl}_t \quad \text{is an $\filt^\eps_t$-martingale.}
\end{equation*}
Let $C=[c_{kl}]_{1 \le k,l \le \dd}$ be a symmetric nonnegative-definite $\dd \times \dd$ matrix and suppose that for each $t \ge 0$, $1 \le k,l \le \dd$,
\begin{equation*}
A^{\eps,kl}_t \to c_{kl} \thinspace t \quad \text{in probability, as $\eps \searrow 0$.}
\end{equation*}
Then, $M^\eps \Longrightarrow \overline{X}$ in the space $\scD([0,\infty);\BR^\dd)$, where $\overline{X}$ is the unique (in distribution) process with independent Gaussian increments having sample paths in $\scC([0,\infty);\BR^\dd)$ such that $\overline{X}^{k}_t$ and $\overline{X}^{k}_t \cdot \overline{X}^{l}_t - c_{kl} \thinspace t$ are $\filt^{\overline{X}}_t$-martingales, i.e., $M^\eps \Longrightarrow \sqrt{C}B_t$, where $B_t$ is $\dd$-dimensional Brownian motion.
\end{proposition}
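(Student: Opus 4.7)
The plan is to build directly on the technical scaffolding already assembled in Section \ref{S:Homogenization}. Lemma \ref{L:mart-decomp} supplies the decomposition
\begin{equation*}
X^\eps_t - \frac{\overline{b}\thinspace t}{\eps} = X^\eps_0 + \eps\left\{\Phi\left(\frac{X^\eps_t}{\eps}, I^\eps_t\right) - \Phi\left(\frac{X^\eps_0}{\eps}, I^\eps_0\right)\right\} + M^\eps_t,
\end{equation*}
so the argument naturally splits into three pieces: (i) a deterministic-style bound showing that the corrector term involving $\Phi$ is uniformly $O(\eps)$; (ii) an application of the martingale \textsc{clt} from Proposition \ref{P:martclt} to obtain $M^\eps \Longrightarrow \sqrt{\ttC}B$ in $\scD([0,\infty);\BR^\dd)$; and (iii) a Slutsky-type patching together, leveraging the fact that the limit process $\sqrt{\ttC}B$ is continuous, to upgrade to convergence in $\scC([0,\infty);\BR^\dd)$.

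Step (i) is nearly trivial: since $\Phi$ is continuous on the compact $\BT^\dd \times \II$, it is bounded, so the corrector is dominated uniformly in $t$ by $\eps\thinspace(|x_0| + 2\|\Phi\|_\infty) \to 0$. For step (ii), I verify the hypotheses of Proposition \ref{P:martclt} with $A^{\eps,kl}_t \triangleq \la M^{\eps,k}, M^{\eps,l}\ra_t$ as given by Lemma \ref{L:CV}. The first jump condition in \eqref{E:mart-clt-tc} holds for free because $A^{\eps,kl}_t$ is absolutely continuous in $t$, hence has no jumps. The second jump condition follows because jumps of $M^{\eps,k}_t$ originate solely from the pure-jump martingale $\ttM^{2,\eps,\Phi^\eps_k}_t$, whose increments are bounded by $2\eps\|\Phi\|_\infty$ thanks to the scaling $\Phi^\eps_k = \eps \thinspace \Phi_k(\cdot/\eps, \cdot)$; hence $\sup_{0 \le t \le T} |M^\eps_t - M^\eps_{t-}|^2 \le C\eps^2 \to 0$. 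The substantive content is the convergence $A^{\eps,kl}_t \to \ttC_{kl}\thinspace t$ in probability. Set
\begin{equation*}
F_{kl}(x,\alpha) \triangleq \bigl((I_\dd - D\Phi)\thinspace a \thinspace (I_\dd - D\Phi)^\intercal\bigr)_{kl}(x,\alpha) + \scQ(\Phi_k,\Phi_l)(x,\alpha),
\end{equation*}
a smooth, $1$-periodic function on $\BT^\dd \times \II$ whose spatial average against $m$ equals exactly $\ttC_{kl}$ by \eqref{E:limcov}. Lemma \ref{L:CV} gives $A^{\eps,kl}_t = \int_0^t F_{kl}(X^\eps_s/\eps, I^\eps_s)\thinspace ds$, and the promised convergence is then delivered by the ergodicity lemma (Lemma \ref{L:ergodicity}), which says that time-integrals of smooth periodic functionals along $(X^\eps_s/\eps, I^\eps_s)$ converge in $L^2$ to their spatial averages against the invariant density $m(x,\alpha)$.

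The main obstacle — more conceptual than calculational — is the ergodicity step, as it is the place where ellipticity of the diffusion matrices and irreducibility of $Q(x)$ combine to guarantee that the projected process on $\BT^\dd \times \II$ mixes fast enough to make the law of large numbers kick in on the diffusive time scale. Every other ingredient above is largely mechanical. With Proposition \ref{P:martclt} producing $M^\eps \Longrightarrow \sqrt{\ttC}B$ in $\scD([0,\infty);\BR^\dd)$, and since the limit has continuous paths, this is equivalent to convergence in the topology of uniform convergence on compact time-intervals. Combining with the uniform bound from step (i) via Slutsky's lemma and noting that the left-hand side $X^\eps_t - \overline{b}\thinspace t/\eps$ already has continuous sample paths (the jumps of $\eps\Phi(X^\eps_t/\eps, I^\eps_t)$ and those of $M^\eps_t$ cancel identically), we obtain the stated convergence in $\scC([0,\infty);\BR^\dd)$.
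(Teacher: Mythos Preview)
Your proposal does not prove Proposition \ref{P:martclt}; it proves Theorem \ref{T:Main}. You have treated Proposition \ref{P:martclt} as a black-box input (indeed, step (ii) of your plan explicitly invokes it) rather than as the target statement. Proposition \ref{P:martclt} is a general functional central limit theorem for families of c\`adl\`ag square-integrable martingales --- its hypotheses make no reference to the switching diffusion $(X^\eps_t,I^\eps_t)$, the cell problem, the corrector $\Phi$, or the ergodicity of the projected process --- and none of those objects can appear in its proof. The paper itself does not supply a proof of Proposition \ref{P:martclt}; it is quoted (with minor adaptation) from Theorems 7.1.1 and 7.1.4 of Ethier--Kurtz \cite{EK86}, and any self-contained argument would have to follow that route: tightness in $\scD([0,\infty);\BR^\dd)$ via the jump-size and compensator conditions \eqref{E:mart-clt-tc}, together with identification of the limit through the martingale problem for the generator $\frac{1}{2}\sum_{k,l} c_{kl}\partial_k\partial_l$.

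That said, if the intended target was Theorem \ref{T:Main}, your write-up is correct and follows the paper's own proof essentially step for step: the same decomposition from Lemma \ref{L:mart-decomp}, the same choice $A^{\eps,kl}_t = \la M^{\eps,k},M^{\eps,l}\ra_t$ from Lemma \ref{L:CV}, the same verification of \eqref{E:mart-clt-tc} via continuity of $A^\eps$ and the $O(\eps)$ bound on the jumps of $\ttM^{2,\eps,\Phi^\eps_k}$, the same appeal to Lemma \ref{L:ergodicity} for $A^{\eps,kl}_t \to \ttC_{kl}t$, and the same Slutsky/continuity argument to pass from $\scD$ to $\scC$.
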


\begin{lemma}[Ergodicity]\label{L:ergodicity}
Let $g:\BT^\dd \times \II \to \BR$ be smooth. Then, for any $t \ge 0$, we have 
\begin{equation}\label{E:ergodicity}
\lim_{\eps \searrow 0} \int_0^t g\left(\frac{X^\eps_s}{\eps},I^\eps_s\right) \thinspace ds = \overline{g} \thinspace t \quad \text{in $L^2$,} \quad \text{where} \quad \overline{g} \triangleq \sum_{\alpha=1}^\mm \int_{\BT^\dd} g(x,\alpha) m(x,\alpha) \thinspace dx,
\end{equation}
with $m(x,\alpha)$ as in Proposition \ref{P:Fredholm}.
\end{lemma}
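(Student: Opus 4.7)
The plan is to use the standard Poisson-equation corrector technique, adapted to the switching setting via Proposition \ref{P:Fredholm}. First I would invoke the Fredholm alternative with $\varphi(x,\alpha) \triangleq g(x,\alpha) - \overline{g}$; the solvability condition \eqref{E:centering} holds since, by the definition of $\overline g$ and the normalization $\sum_\alpha \int_{\BT^\dd} m(x,\alpha) \thinspace dx = 1$ in \eqref{E:inv-dens-pde},
\begin{equation*}
\sum_{\alpha=1}^\mm \int_{\BT^\dd} \bigl(g(x,\alpha) - \overline g\bigr)\thinspace m(x,\alpha) \thinspace dx = \overline g - \overline g = 0.
\end{equation*}
Proposition \ref{P:Fredholm} therefore produces a smooth, $1$-periodic $\psi : \BR^\dd \times \II \to \BR$ with $(\gen \psi)(x,\alpha) = g(x,\alpha) - \overline g$ for every $(x,\alpha)$.

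Next I would introduce the rescaled corrector $\psi^\eps(x,\alpha) \triangleq \eps^2 \thinspace \psi(x/\eps, \alpha)$ and verify directly from \eqref{E:macro-gen} that
\begin{equation*}
(\gen^\eps \psi^\eps)(x,\alpha) = (\gen \psi)(x/\eps, \alpha) = g(x/\eps, \alpha) - \overline g,
\end{equation*}
the various powers of $\eps$ conspiring precisely as in the classical two-scale calculation: the drift contributes $\eps^{-1} \cdot \eps = 1$, the jump part contributes $\eps^{-2} \cdot \eps^2 = 1$, and the diffusion coefficient is unchanged. The Ito formula \eqref{E:Ito-macro} applied to $\psi^\eps$, followed by a rearrangement, then gives
\begin{equation*}
\int_0^t g\!\left(\frac{X^\eps_s}{\eps}, I^\eps_s\right) ds - \overline g \thinspace t = \psi^\eps(X^\eps_t, I^\eps_t) - \psi^\eps(X^\eps_0, I^\eps_0) - \ttM^{1,\eps,\psi^\eps}_t - \ttM^{2,\eps,\psi^\eps}_t.
\end{equation*}

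It remains to bound the right-hand side in $L^2$. Since $\psi$ is smooth and $1$-periodic, hence bounded on $\BT^\dd \times \II$, the boundary terms satisfy $|\psi^\eps(X^\eps_t, I^\eps_t) - \psi^\eps(X^\eps_0, I^\eps_0)| \le 2 \eps^2 \|\psi\|_\infty$ uniformly in $\omega$. Mimicking the calculations leading to \eqref{E:M_12} and \eqref{E:qcv-M2} in the proof of Lemma \ref{L:CV}, a short computation gives
\begin{equation*}
\la \ttM^{1,\eps,\psi^\eps} \ra_t = \eps^2 \int_0^t (\nabla \psi)^\intercal a (\nabla \psi)\!\left(\frac{X^\eps_s}{\eps}, I^\eps_s\right) ds, \quad \la \ttM^{2,\eps,\psi^\eps} \ra_t = \eps^2 \int_0^t \scQ(\psi,\psi)\!\left(\frac{X^\eps_s}{\eps}, I^\eps_s\right) ds,
\end{equation*}
and boundedness of $\psi$, $a$, and the switching intensities on the torus yields $\BE[(\ttM^{i,\eps,\psi^\eps}_t)^2] \le C \eps^2 t$ for $i=1,2$. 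Applying Minkowski's inequality to the displayed identity therefore gives the $L^2$ bound $O(\eps^2) + O(\eps \sqrt{t}) \to 0$, which is the claim.

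The argument is essentially routine once Proposition \ref{P:Fredholm} is in hand; the only subtle point is choosing the correct power of $\eps$ in the corrector ansatz. Using $\psi^\eps = \eps^2 \psi(\cdot/\eps, \cdot)$ (rather than $\eps \psi(\cdot/\eps,\cdot)$, as in the cell-problem rescaling of Lemma \ref{L:mart-decomp}) is what drives both martingale quadratic variations to be $O(\eps^2 t)$ rather than $O(1)$. This is natural: the ergodic averaging statement established here is one order of $\eps$ weaker than the central-limit-scale fluctuation result in Theorem \ref{T:Main}, so the corrector is rescaled by an additional factor of $\eps$.
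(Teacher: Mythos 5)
Your proof is correct and follows essentially the same route as the paper: invoke Proposition \ref{P:Fredholm} with $\varphi = g - \overline g$, introduce a rescaled corrector, apply the Ito formula \eqref{E:Ito-macro}, and bound the boundary and martingale terms in $L^2$. The only cosmetic difference is the choice of scaling: you take $\psi^\eps = \eps^2 \psi(\cdot/\eps,\cdot)$ so that $\gen^\eps \psi^\eps = (\gen\psi)(\cdot/\eps,\cdot)$ exactly, whereas the paper takes $f^\eps = \eps f(\cdot/\eps,\cdot)$, obtains $\gen^\eps f^\eps = \tfrac{1}{\eps}(\gen f)(\cdot/\eps,\cdot)$, and then multiplies the Ito identity through by $\eps$ --- the two manipulations are algebraically identical and lead to the same $O(\eps^2) + O(\eps\sqrt t)$ bound.
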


\begin{proof}
Since the function $g(x,\alpha)-\overline{g}$ satisfies \eqref{E:centering}, it follows from Proposition \ref{P:Fredholm} that there exists a function $f(x,\alpha)$ with $x \mapsto f(x,\alpha) \in C^2$ for each $\alpha \in \II$, such that $\gen f(x,\alpha)=g(x,\alpha)-\overline{g}$ for all $(x,\alpha) \in \BR^\dd \times \II$, $f$ is $1$-periodic in $x$. Setting $f^\eps(x,\alpha) \triangleq \eps f\left(\frac{x}{\eps},\alpha\right)$, we note that $\gen^\eps f^\eps(x,\alpha)=\frac{1}{\eps} \gen f\left(\frac{x}{\eps},\alpha\right)$ and use Ito's formula \eqref{E:Ito-macro} to obtain
\begin{equation*}
\int_0^t \left\{g\left(\frac{X^\eps_s}{\eps},I^\eps_s\right) - \overline{g}\right\} \thinspace ds = \eps^2 \left\{f\left(\frac{X^\eps_t}{\eps},I^\eps_t\right) - f\left(\frac{X^\eps_0}{\eps},I^\eps_0\right)\right\} - \eps \thinspace \ttM^{1,\eps,f^\eps}_t - \eps \thinspace \ttM^{2,\eps,f^\eps}_t
\end{equation*}
where $\ttM^{i,\eps,f^\eps}_t$, $i \in \{1,2\}$, are as in \eqref{E:Ito-macro}. Now, $\la \ttM^{1,\eps,f^\eps} \ra_t = \sum_{j=1}^\rr \int_0^t \left\{\sum_{i=1}^\dd \frac{\partial f}{\partial x_i}\left(\frac{X^\eps_s}{\eps},I^\eps_s\right) \sigma_{ij}\left(\frac{X^\eps_s}{\eps},I^\eps_s\right)\right\}^2 \thinspace ds$. Next, it follows from \eqref{E:M_12} and \eqref{E:qcv-M2} that $\la \ttM^{2,\eps,f^\eps} \ra_t = \int_0^t \scQ(f,f)\left(\frac{X^\eps_s}{\eps},I^\eps_s\right) \thinspace ds$. Recalling the boundedness of $f$ and its partial derivatives, together with that of $\sigma_{ij}$, and noting that $\BE\left[\left(\ttM^{i,\eps,f^\eps}_t\right)^2\right] = \BE\left[ \la \ttM^{i,\eps,f^\eps} \ra_t \right]$ for $i \in \{1,2\}$, it now follows that there exists a constant $K>0$ such that
\begin{equation*}
\BE\left[\left|\int_0^t \left\{ g\left(\frac{X^\eps_s}{\eps},I^\eps_s\right) - \overline{g}\right\} \thinspace ds\right|^2 \right] \le K\eps^2 (t+\eps^2),
\end{equation*}
which easily yields the stated claim.
\end{proof}
%\textsf{\color{blue} (Write the ergodic lemma above in terms of the macroscopic processes $(X^\eps_t,I^\eps_t)$.) \color{black}}

We now prove our main homogenization result, viz., Theorem \ref{T:Main}.

\begin{proof}[Proof of Theorem \ref{T:Main}]
We begin by outlining in broad strokes the strategy and organization of the proof. The starting point is Lemma \ref{L:mart-decomp} which decomposes the process $X^\eps_t - \overline{b}\thinspace t/\eps$ as the sum of two terms: a small term $X^{\eps}_0 + \eps \left\{\Phi \left(\frac{X^\eps_t}{\eps},I^\eps_t\right) - \Phi \left(\frac{X^\eps_0}{\eps},I^\eps_0\right)\right\}$ and an $\BR^\dd$-valued martingale $M^\eps_t$. The aim is to show that the first of these terms converges to $0$ while the second converges to $\sqrt{\ttC} B_t$, and then argue that the sum, viz., $X^\eps_t - \overline{b}\thinspace t/\eps$ converges to $\sqrt{\ttC} B_t$. The bulk of the effort centers around proving that $M^\eps_t \Longrightarrow \sqrt{\ttC} B_t$ in the space $\scD([0,\infty);\BR^\dd)$ as $\eps \searrow 0$. This is accomplished via the Martingale {\sc clt} stated in Proposition \ref{P:martclt} by exploiting the expressions for the cross-varation processes $\la M^{\eps,k},M^{\eps,l}\ra_t$ obtained in Lemma \ref{L:CV}, together with the ergodicity results in Lemma \ref{L:ergodicity}.

We start by noting that the process $M^\eps_t \triangleq (M^{\eps,1}_t,\dots,M^{\eps,\dd}_t)$ from Lemma \ref{L:mart-decomp} with components defined in \eqref{E:cons-mart}, is a square-integrable $\filt_t$-martingale with sample paths in $\scD([0,\infty);\BR^\dd)$ satisfying $M^\eps_0=0$. 
Let $A^\eps = \{A^\eps_t=[A^{\eps,kl}_t]_{1 \le k,l \le \dd}:t \ge 0\}$ be the symmetric $\dd \times \dd$ matrix-valued process defined by $A^{\eps,kl}_t \triangleq \la M^{\eps,k},M^{\eps,l} \ra_t$ for $1 \le k,l \le \dd$, $t \ge 0$. Having set up the notation, the next several steps involve checking that the conditions for invoking the Martingale {\sc clt} are indeed satisfied.

Let $1 \le k,l \le \dd$. By Lemma \ref{L:CV}, the sample paths of $A^{\eps,kl}_t$ are continuous, and the process $M^{\eps,k}_t \cdot M^{\eps,l}_t - A^{\eps,kl}_t$ is clearly a martingale. We next turn to the matrix-valued process
\begin{equation}\label{E:A-matrix}
A^\eps_t = \int_0^t \left((I_\dd-D\Phi) \thinspace a \thinspace (I_\dd-D\Phi)^\intercal\right)\left(\frac{X^\eps_s}{\eps},I^\eps_s\right) \thinspace ds + \int_0^t \scQ(\Phi,\Phi)\left(\frac{X^\eps_s}{\eps},I^\eps_s\right) \thinspace ds,
\end{equation}
where $\scQ(\Phi,\Phi)$ is computed using \eqref{E:cdc} and the second equation in \eqref{E:limcov}. A simple computation shows that for any $(x,\alpha) \in \BR^\dd \times \II$ and any $\xi \in \BR^\dd$, we have
\begin{multline}\label{E:nonnegdef}
\xi^\intercal \left[(I_\dd - D\Phi)\thinspace a \thinspace (I-D\Phi)^\intercal\right](x,\alpha)\thinspace \xi + \xi^\intercal \scQ(\Phi,\Phi)(x,\alpha) \thinspace \xi \\= \left\{(I_\dd-D\Phi)^\intercal(x,\alpha)\thinspace \xi\right\}^\intercal a(x,\alpha) \left\{(I_\dd-D\Phi)^\intercal(x,\alpha) \thinspace\xi\right\} + \sum_{\beta=1}^\mm q_{\alpha\beta}(x)\left|\left(\Phi(x,\beta)-\Phi(x,\alpha)\right)^\intercal \xi\right|^2.
\end{multline}
Recalling the uniform ellipticity of $a(x,\alpha)$ in Assumption \ref{A:DD} and the nonnegativity of $q_{\alpha\beta}$ for $\beta \neq \alpha$ in Assumption \ref{A:SI}, it now easily follows from equations \eqref{E:A-matrix} and \eqref{E:nonnegdef} that for any $\xi \in \BR^\dd$, $t>s \ge 0$, we have $\xi^\intercal (A^\eps_t - A^\eps_s)\thinspace \xi \ge 0$, i.e., $A^\eps_t - A^\eps_s$ is nonnegative-definite for $t>s \ge 0$. 

Turning to the conditions in \eqref{E:mart-clt-tc}, we note that the first of these holds on account of continuity of $A^{\eps,kl}_t$. To check the second, observe that $|M^\eps_t - M^\eps_{t-}|^2 = \sum_{k=1}^\dd |M^{\eps,k}_t - M^{\eps,k}_{t-}|^2 \le 4\eps^2 \sum_{k=1}^\dd \sup_{(x,\alpha) \in \BR^\dd \times \II}\left|\Phi_k\left(\frac{x}{\eps},\alpha\right)\right|^2$, where the latter follows from equations \eqref{E:cons-mart} and (the last equation in) \eqref{E:Ito-macro} with $\Phi^\eps_k(x,\alpha)=\eps \Phi_k(x/\eps,\alpha)$. Since the functions $\Phi_k$, $1 \le k \le \dd$, are bounded (being smooth and $1$-periodic in $x$), we easily get that $\lim_{\eps \searrow 0} \BE\left[\sup_{0 \le t \le T} \left|M^\eps_t - M^\eps_{t-}\right|^2 \right]=0$ for any $T>0$, as required.

We now turn our attention to the matrix $\ttC=[\ttC_{kl}]_{1 \le k,l \le \dd}$ in \eqref{E:limcov}, which is easily seen to be symmetric. Multiplying both sides of \eqref{E:nonnegdef} by the positive function $m(x,\alpha)$, integrating over $x \in \BT^\dd$ and summing over $\alpha \in \II$, it follows once again from Assumptions \ref{A:DD} and \ref{A:SI} that $\xi^\intercal \ttC \thinspace \xi \ge 0$ for all $\xi \in \BR^\dd$, i.e., $\ttC$ is nonnegative-definite. 
Finally, we note that on account of Assumptions \ref{A:DD}--\ref{A:Periodicity}, and the findings of Corollary \ref{C:cell}, the components of the matrix-valued functions $(I_\dd - D\Phi)\thinspace a \thinspace (I-D\Phi)^\intercal(x,\alpha)$ and $\scQ(\Phi,\Phi)(x,\alpha)$ are smooth and $1$-periodic in $x$. This enables application of Lemma \ref{L:ergodicity} to the components of $A^\eps_t$ in \eqref{E:A-matrix}, allowing us to conclude that as $\eps \searrow 0$, $A^{\eps,kl}_t \to \ttC_{kl} t$ in $L^2$, and hence in probability, for every $t \ge 0$. 

Putting the pieces together, we now conclude from Proposition \ref{P:martclt} that $M^\eps_t \Longrightarrow \sqrt{\ttC} B_t$ in the space $\scD([0,\infty);\BR^\dd)$ as $\eps \searrow 0$. 

We would now like to argue that the process $X^\eps_t- \overline{b} \thinspace t/\eps$ which is, by Lemma \ref{L:mart-decomp}, the sum of $M^\eps_t$ and $X^\eps_0 + \eps \left\{\Phi \left(\frac{X^\eps_t}{\eps},I^\eps_t\right) - \Phi \left(\frac{X^\eps_0}{\eps},I^\eps_0\right)\right\}$, also converges to  
$\sqrt{\ttC} B_t$ in the space $\scD([0,\infty);\BR^\dd)$. To see this, let $d_\Sk$ denote the Skorohod metric \cite{EK86} on $\scD([0,\infty);\BR^\dd)$ and note that for $x,y \in \scD([0,\infty);\BR^\dd)$, we have $d_\Sk(x,y) \le \rho(x,y) \triangleq \int_0^\infty e^{-u} \left(\sup_{0 \le t \le u}|x(t)-y(t)|\right)\wedge 1 \thinspace du$. Owing to the boundedness of $\Phi$, we now have that $\rho\left(X^\eps_t- \overline{b} \thinspace t/\eps,M^\eps_t\right) \to 0$ in probability. By \cite[Corollary 3.3.3]{EK86}, it now follows that $X^\eps_t- \overline{b} \thinspace t/\eps \Longrightarrow \sqrt{\ttC}B_t$ in the space $\scD([0,\infty);\BR^\dd)$ as $\eps \searrow 0$. Since the probability laws of both $X^\eps_t - \overline{b}\thinspace t/\eps$ and $\sqrt{\ttC}B_t$ are supported on $\scC([0,\infty);\BR^\dd)$, we now use \cite[Problem 3.11.25]{EK86} to conclude that in fact $X^\eps_t - \overline{b}\thinspace t/\eps \Longrightarrow \sqrt{\ttC}B_t$ in the space $\scC([0,\infty);\BR^\dd)$ as $\eps \searrow 0$.
\end{proof}

% Section: Fredholm alternative
\section{Fredholm Alternative}\label{S:Fredholm}

Our main goal in this section is to prove Proposition \ref{P:Fredholm}. Thus, we would like to study solvability and regularity of solutions for  {\sc pde} of the form $\gen u(x,\alpha)=\phi(x,\alpha)$ or, equivalently,  
\begin{equation}\label{E:generic-elliptic-pde}
-\gen u(x,\alpha)=\psi(x,\alpha),
\end{equation}
the notational sign change being simply for convenience in applying {\sc pde} results. While the analysis proceeds along a well-worn route---naturally adapting the single elliptic {\sc pde} calculations in \cite[Chapter 6]{Evans-PDE}, \cite[Chapter 7]{PavliotisStuart} to the case of systems---we provide a reasonably detailed discussion owing to the importance of Proposition \ref{P:Fredholm} in furnishing (via Corollary \ref{C:cell}) an abundance of smooth functions for use in the Ito formula. 

To set the stage, we start by noting that any function $v(x,\alpha):\BR^\dd \times \II \to \BR$ can be thought of as a vector-valued function $\bm v(x) \triangleq (v_1(x),\dots,v_\mm(x)): \BR^\dd \to \BR^\mm$ where $v_\alpha(x) \triangleq v(x,\alpha)$. We will also find it convenient to write $b^\alpha(x) \triangleq b(x,\alpha)$, $a^\alpha(x) \triangleq a(x,\alpha)$ and occasionally think of $b$ and $a$ as families of functions on $\BR^\dd$ parametrized by $\alpha \in \II$. With this notation in place, we set 
\begin{equation}\label{E:PDE-system-operator}
L \triangleq \mathtt{diag}(L_1,\dots,L_\mm) \quad \text{where} \quad L_\alpha \triangleq \sum_{j=1}^\dd b_j^\alpha(x)\frac{\partial}{\partial x_j} + \frac{1}{2}\sum_{j,k=1}^\dd a^\alpha_{jk}(x) \frac{\partial^2}{\partial x_j \partial x_k} \quad \text{for $1 \le \alpha \le \mm$.}
\end{equation}
Recalling the matrix $Q(x) \triangleq [q_{\alpha\beta}(x)]_{1 \le \alpha,\beta \le \mm}$ from Assumption \ref{A:SI}, together with the fact that $q_{\alpha\alpha}(x)=-\sum_{\substack{\beta \in \II\\\beta \neq \alpha}}q_{\alpha\beta}(x)$ for $1 \le \alpha \le \mm$, it is now evident that the {\sc pde} \eqref{E:generic-elliptic-pde} is equivalent to the weakly coupled elliptic system
\begin{equation}\label{E:wces} %weakly coupled elliptic system
(-L - Q)\bm u(x) = \bm \psi(x) \quad \text{for $x \in \BR^\dd$,}
\end{equation}
where $\bm u$, $\bm \psi$ are in accordance with the aforementioned notation. Recalling \eqref{E:adjoint}, we note that the adjoint {\sc pde} $-\gen^* v(x,\alpha)=\zeta(x,\alpha)$ can be written as the system
\begin{equation}\label{E:wces-adjoint}
\begin{aligned}
(-L^* - Q^*)\bm v(x) &= \bm \zeta(x), \quad \text{where} \quad Q^* \triangleq Q^\intercal, \quad \text{and} \quad L^* \triangleq \mathtt{diag}(L_1^*,\dots,L_\mm^*)\\
\text{with} \quad L_\alpha^*  &\triangleq -\sum_{j=1}^\dd \frac{\partial}{\partial x_j} \left(b_j^\alpha(x) \medspace\cdot\right) + \frac{1}{2}\sum_{j,k=1}^\dd  \frac{\partial^2}{\partial x_j \partial x_k}\left(a^\alpha_{jk}(x) \medspace \cdot\right) \quad \text{for $1 \le \alpha \le \mm$.}
\end{aligned}
\end{equation}

Next, we introduce some Sobolev spaces of periodic functions, following the development in \cite{Temam}. Below, $\BT^\dd$ denotes the $\dd$-dimensional torus. 
For $k \in \BZ^+$, let $H^k_\per(\BT^\dd)$ denote the space of functions $u \in H^k_\loc(\BR^\dd)$ which are $1$-periodic. For a multiindex $\mi=(\mi_1,\dots,\mi_\dd) \in (\BZ^+)^\dd$, define $|\mi| \triangleq \sum_{j=1}^\dd \mi_j$. Define an inner product on $H^k_\per(\BT^\dd)$ by setting
%\begin{equation*}
$(u,v)_{H^k_\per(\BT^\dd)} \triangleq \sum_{|\mi| \le k} \int_{\BT^\dd} D^\mi u(x) \thinspace D^\mi v(x) \thinspace dx$ for $u,v \in H^k_\per(\BT^\dd)$
%\end{equation*}
and let $\|u\|_{H^k_\per(\BT^\dd)} \triangleq \sqrt{(u,u)_{H^k_\per(\BT^\dd)}}$ be the induced norm. Now, let $\HH^k$ denote the $\mm$-fold product of $H^k_\per(\BT^\dd)$, viz., $(H^k_\per(\BT^\dd))^\mm$, with the  inner product between $\bm u=(u_1,\dots,u_\mm)$ and $\bm v=(v_1,\dots,v_\mm) \in \HH^k$ defined by
%\begin{equation*}
$(\bm u,\bm v)_{\HH^k} \triangleq \sum_{\alpha=1}^\mm (u_\alpha,v_\alpha)_{H^k_\per(\BT^\dd)}$,
%\end{equation*}
and note that
%\begin{equation*}
$\|\bm u\|_{\HH^k}^2 = \sum_{\alpha=1}^\mm \|u_\alpha\|^2_{H^k_\per(\BT^\dd)}$.
%\end{equation*}
We will be most interested in the cases $\LL \triangleq \HH^0 = (L^2_\per(\BT^\dd))^\mm$, and $\HH^1$. In these cases, if $\bm u=(u_1,\dots,u_\mm)$, $\bm v=(v_1,\dots,v_\mm)$, then the respective inner products take the form  
\begin{equation*}
(\bm u,\bm v)_\LL = \sum_{\alpha=1}^\mm \int_{\BT^\dd} u_\alpha(x) v_\alpha(x) \thinspace dx , \qquad (\bm u,\bm v)_{\HH^1} = \sum_{\alpha=1}^\mm \int_{\BT^\dd} u_\alpha v_\alpha \thinspace dx + \sum_{\alpha=1}^\mm \int_{\BT^\dd} (\nabla u_\alpha(x), \nabla v_\alpha(x))_{\BR^\dd} \thinspace dx.
\end{equation*}
Note that for any $\bm u \in \HH^1$, we have $\|\bm u\|_\LL \le \|\bm u\|_{\HH^1}$.

To study solvability of \eqref{E:wces} within the weak (variational) formulation, we use the smoothness of the coefficients (Assumption \ref{A:DD}) to first write each $L_\alpha$ from \eqref{E:PDE-system-operator} in divergence form
\begin{equation*}
L_\alpha f(x) = \frac{1}{2} \sfdiv\left(a^\alpha(x) \nabla f(x)\right) + \left(\tilde{b}^\alpha(x),\nabla f(x)\right)_{\BR^\dd} \quad\text{where} \quad \tilde{b}^\alpha_j(x) \triangleq b^\alpha_j(x) - \frac{1}{2}\sum_{k=1}^\dd \frac{\partial}{\partial x_k}(a^\alpha_{kj}), \quad \text{$1 \le j \le \dd$,}
\end{equation*}
for $f:\BR^\dd \to \BR$ sufficiently regular, $1 \le \alpha \le \mm$, $x \in \BR^\dd$. We next introduce the bilinear form $\blf:\HH^1 \times \HH^1 \to \BR$ defined by
\begin{multline}\label{E:blf}
\blf[\bm u,\bm v] \triangleq \frac{1}{2}\sum_{\alpha=1}^\mm \int_{\BT^\dd} \left(a^\alpha(x) \nabla u_\alpha(x),\nabla v_\alpha(x)\right)_{\BR^\dd} dx - \sum_{\alpha=1}^\mm \int_{\BT^\dd} \left(\tilde{b}^\alpha(x),\nabla u_\alpha(x) \right)_{\BR^\dd} v_\alpha(x) \thinspace dx \\- \sum_{\alpha=1}^\mm \sum_{\beta=1}^\mm \int_{\BT^\dd} q_{\alpha\beta}(x) \left[u_\beta(x)-u_\alpha(x)\right] v_\alpha(x) \thinspace dx.
\end{multline}
Thus, $\blf[\bm u,\bm v]$ is obtained by multiplying the left-hand side of the $\alpha$-th equation in \eqref{E:wces} by $v_\alpha$, integrating over $\BT^\dd$, doing an integration by parts in the diffusive terms, and summing over $\alpha$, once again using $q_{\alpha\alpha}(x)=-\sum_{\substack{\beta \in \II\\\beta \neq \alpha}}q_{\alpha\beta}(x)$. 

\begin{definition}[Weak solution]\label{D:weak-solution}
Let $\bm \psi \in \LL$. By a weak solution of the system $(-L-Q)\bm u=\bm \psi$, we mean a function $\bm u \in \HH^1$ such that $\blf[\bm u,\bm v]=(\bm \psi,\bm v)_\LL$ for all $\bm v \in \HH^1$. More generally, if $\lambda \in \BR$ and $I_\mm$ denotes the $\mm \times \mm$ identity matrix, a weak solution to the system $(\lambda I_\mm - L - Q)\bm u = \bm \psi$ is a function $\bm u \in \HH^1$ satisfying $\blf_\lambda[\bm u,\bm v]=(\bm \psi,\bm v)_\LL$ for all $\bm v \in \HH^1$, where $\blf_\lambda[\bm u,\bm v] \triangleq \blf[\bm u,\bm v] + \lambda(\bm u,\bm v)_{\LL}$.
\end{definition}

\begin{remark}\label{R:adjoint-pde}
It is easily checked that the operators $L_\alpha^*$ defined in \eqref{E:wces-adjoint} can be expressed in divergence form as $L_\alpha^* v_\alpha(x) = \frac{1}{2}\mathsf{div}\left(a^\alpha(x)\nabla v_\alpha(x)\right) - \mathsf{div}\left(\tilde{b}^\alpha(x)v_\alpha(x)\right)$, and that the bilinear form $\blf^*[\bm v,\bm u]$ associated with \eqref{E:wces-adjoint} satisfies $\blf^*[\bm v,\bm u]=\blf[\bm u,\bm v]$ for all $\bm u,\bm v \in \HH^1$. Solutions to the adjoint systems $(-L^*-Q^*)\bm v=\bm \zeta$ or $(\lambda I_\mm -L^*-Q^*)\bm v=\bm \zeta$ are defined in a manner similar to that in Definition \ref{D:weak-solution}.
\end{remark}

%For $u(x)=(u_1(x),\dots,u_\mm(x)) \in \HH^1$ and $\psi(x)=(\psi_1(x),\dots,\psi_\mm(x)) \in \LL$, the system of {\sc pde} $-\gen u = \psi$ can be written

% Coercivity
\begin{lemma}[Coercivity]\label{L:coercivity}
There exist constants $k_{\ref{L:coercivity}}>0$, $\lambda_{\ref{L:coercivity}} \ge 0$ such that 
\begin{equation*}
k_{\ref{L:coercivity}}\|\bm u\|_{\HH^1}^2 \le \blf[\bm u,\bm u] + \lambda_{\ref{L:coercivity}} \|\bm u\|_{\LL}^2 \qquad \text{for all $\bm u \in \HH^1$.}
\end{equation*}
\end{lemma}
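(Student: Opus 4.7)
The statement is a G{\aa}rding-type inequality for the weakly coupled elliptic system, so the plan is to adapt the standard single-equation argument (cf.\ \cite[Chapter 6]{Evans-PDE}) by tracking how each of the three groups of terms in \eqref{E:blf} contributes when one takes $\bm v = \bm u$. The leading diffusive term should supply control over $\|\nabla \bm u\|_\LL^2$ via the uniform ellipticity \eqref{E:ellipticity}; the drift term will be handled by Cauchy--Schwarz and Young's inequality at the price of extra $L^2$ mass; and the jump term, thanks to boundedness of the intensities $q_{\alpha\beta}$ (Assumption \ref{A:SI}(i)), will also contribute only $L^2$ mass.

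Concretely, the first move is to bound the diffusive term from below by $\tfrac{a_\sfmin}{2}\|\nabla \bm u\|_\LL^2$ by applying \eqref{E:ellipticity} componentwise in $\alpha$. Next I would note that each $\tilde b^\alpha$ is bounded on $\BT^\dd$ (by smoothness and periodicity, Assumptions \ref{A:DD} and \ref{A:Periodicity}) and apply Young's inequality with a small parameter $\eta>0$ to the drift term, yielding a bound of the form $\eta\|\nabla \bm u\|_\LL^2 + C_\eta \|\bm u\|_\LL^2$. Choosing $\eta = a_\sfmin/4$ then preserves $a_\sfmin/4$ of the gradient mass.

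The jump term requires a bit of algebraic manipulation. After setting $\bm v = \bm u$ in the last sum of \eqref{E:blf} and using $\sum_\beta q_{\alpha\beta}(x)=0$ from Assumption \ref{A:SI}(ii), the identity
$$-\sum_{\alpha,\beta=1}^\mm \int_{\BT^\dd} q_{\alpha\beta}(x)\bigl[u_\beta(x)-u_\alpha(x)\bigr] u_\alpha(x)\thinspace dx = -\sum_{\alpha,\beta=1}^\mm \int_{\BT^\dd} q_{\alpha\beta}(x)\thinspace u_\alpha(x) u_\beta(x)\thinspace dx$$
reduces matters to a purely $L^2$-type expression, which by $|q_{\alpha\beta}(x)|\le \bar q$ for $\beta\neq\alpha$ (extended to the diagonal via $|q_{\alpha\alpha}|\le (\mm-1)\bar q$) and $2|u_\alpha u_\beta|\le u_\alpha^2 + u_\beta^2$ can be dominated by a constant times $\|\bm u\|_\LL^2$. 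Because $Q(x)$ is not symmetric in general, I do not expect to extract any definite sign from this term---it must be absorbed entirely into the $\lambda_{\ref{L:coercivity}}\|\bm u\|_\LL^2$ contribution, which is also why one cannot in general take $\lambda_{\ref{L:coercivity}}=0$.

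Assembling the three estimates yields $\blf[\bm u,\bm u] \ge \tfrac{a_\sfmin}{4}\|\nabla \bm u\|_\LL^2 - K\|\bm u\|_\LL^2$ for some constant $K>0$ depending on $a_\sfmin$, $\sup_\alpha\|\tilde b^\alpha\|_\infty$, $\bar q$, and $\mm$; adding $\bigl(K + \tfrac{a_\sfmin}{4}\bigr)\|\bm u\|_\LL^2$ to both sides and recalling $\|\bm u\|_{\HH^1}^2 = \|\bm u\|_\LL^2 + \|\nabla \bm u\|_\LL^2$ gives the claim with $k_{\ref{L:coercivity}} = a_\sfmin/4$ and $\lambda_{\ref{L:coercivity}} = K + a_\sfmin/4$. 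The argument is essentially routine; the only step that calls for a moment of care is the algebraic rewriting of the jump term, since the asymmetry of $Q$ precludes any sign-preserving simplification.
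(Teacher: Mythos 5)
Your proposal is correct and follows essentially the same route as the paper: uniform ellipticity for the gradient term, Young's inequality with a small parameter (effectively $a_\sfmin/4$ absorbed into the gradient) for the drift term, and a brute-force $L^2$ estimate for the jump term absorbed into $\lambda_{\ref{L:coercivity}}$. The only (cosmetic) deviation is that you first simplify the jump term to $-\sum_{\alpha,\beta}\int q_{\alpha\beta}u_\alpha u_\beta\,dx$ using $\sum_\beta q_{\alpha\beta}=0$ before bounding it, whereas the paper bounds the original form $\sum_{\alpha,\beta}\int q_{\alpha\beta}[u_\beta-u_\alpha]u_\alpha\,dx$ directly; both yield a constant multiple of $\|\bm u\|_\LL^2$, and your observation that the asymmetry of $Q(x)$ prevents any sign-preserving carr\'e-du-champ rewrite is exactly the right reason this term must be dumped into $\lambda_{\ref{L:coercivity}}$.
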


\begin{proof}
From the uniform ellipticity condition \eqref{E:ellipticity} in Assumption \ref{A:DD}, we easily see from \eqref{E:blf} that for $\bm u \in \HH^1$, we have
%\begin{multline*}
$\frac{1}{2}a_\sfmin \sum_{\alpha=1}^\mm \int_{\BT^\dd} |\nabla u_\alpha(x)|^2 \thinspace dx \le \blf[\bm u,\bm u] + J_1 + J_2$, 
%\qquad \text{where}\\
where $J_1 \triangleq \sum_{\alpha=1}^\mm \int_{\BT^\dd} \left(\tilde{b}^\alpha(x),\nabla u_\alpha(x) \right)_{\BR^\dd} u_\alpha(x) \thinspace dx$, $J_2 \triangleq \sum_{\alpha=1}^\mm \sum_{\beta=1}^\mm \int_{\BT^\dd} q_{\alpha\beta}(x) \left[u_\beta(x)-u_\alpha(x)\right] u_\alpha(x) \thinspace dx$.
%\end{multline*}
Set $B \triangleq \max_{1 \le \alpha \le \mm}\|\tilde{b}^\alpha\|_{L^\infty(\BT^\dd)}$. Then, for any $\delta>0$, it follows from Cauchy's inequalities that 
%\begin{multline*}
$|J_1|  \le \sum_{\alpha=1}^\mm \int_{\BT^\dd} |\tilde{b}^\alpha(x)| |\nabla u_\alpha(x)| |u_\alpha(x)| \thinspace dx \\\le B\sum_{\alpha=1}^\mm \left\{\delta\int_{\BT^\dd}|\nabla u_\alpha(x)|^2 \thinspace dx + \frac{1}{4\delta}\int_{\BT^\dd} |u_\alpha(x)|^2 \thinspace dx\right\} = B\delta \sum_{\alpha=1}^\mm \|\nabla u_\alpha\|_{L^2_\per(\BT^\dd)}^2 + \frac{B}{4\delta} \sum_{\alpha=1}^\mm \|u_\alpha\|^2_{L^2_\per(\BT^\dd)}$.
%\end{multline*}
Turning to $J_2$, and recalling $\bar{q}$ from Assumption \ref{A:SI}, we easily see from Cauchy's inequality that\\
%\begin{equation*}
$|J_2| \le \bar{q} \sum_{\alpha=1}^\mm \sum_{\beta=1}^\mm \int_{\BT^\dd} \left\{\frac{1}{2}|u_\alpha(x)|^2 + \frac{1}{2}|u_\beta(x)|^2 + |u_\alpha(x)|^2\right\} \thinspace dx = 2 \bar{q}\mm \sum_{\alpha=1}^\mm \|u_\alpha\|^2_{L^2_\per(\BT^\dd)} = 2\bar{q}\mm \|\bm u\|^2_\LL$. \\
%\end{equation*}
Putting the pieces together, we now have
%\begin{equation*}
$\frac{1}{2}a_\sfmin \sum_{\alpha=1}^\mm \int_{\BT^\dd} |\nabla u_\alpha(x)|^2 \thinspace dx \le \blf[\bm u,\bm u] + B\delta \sum_{\alpha=1}^\mm \|\nabla u_\alpha\|_{L^2_\per(\BT^\dd)}^2 + \left(\frac{B}{4\delta} + 2\bar{q}\mm\right) \|\bm u\|^2_\LL$.
%\end{equation*}
Taking $\delta=\frac{a_\sfmin}{4B}$ and rearranging, we get 
%\begin{equation*}
$\frac{1}{4}a_\sfmin \sum_{\alpha=1}^\mm \int_{\BT^\dd} |\nabla u_\alpha(x)|^2 \thinspace dx \\\le \blf[\bm u,\bm u] +  \left(\frac{B}{4\delta} + 2\bar{q}\mm\right) \|\bm u\|^2_\LL$.
%\end{equation*}
Adding $\frac{1}{4}a_\sfmin \|\bm u\|^2_\LL$ on both sides, we get the stated result with $k_{\ref{L:coercivity}}=a_\sfmin/4$ and $\lambda_{\ref{L:coercivity}}=\frac{B^2}{a_\sfmin} + 2\bar{q} \mm + \frac{a_\sfmin}{4}$.
\end{proof}

% Boundedness
\begin{lemma}[Boundedness]\label{L:boundedness}
There exists a constant $K_{\ref{L:boundedness}}>0$ such that
\begin{equation*}
|\blf[\bm u,\bm v]| \le K_{\ref{L:boundedness}}\|\bm u\|_{\HH^1} \|\bm v\|_{\HH^1} \qquad \text{for all $\bm u,\bm v \in \HH^1$.}
\end{equation*}
\end{lemma}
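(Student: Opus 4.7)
The plan is to bound each of the three terms in the definition \eqref{E:blf} of $\blf[\bm u, \bm v]$ separately using Cauchy--Schwarz (in $L^2(\BT^\dd)$ and then over the finite index set $\II$), together with the boundedness of the coefficients $a^\alpha$, $\tilde b^\alpha$, and $q_{\alpha\beta}$. These boundedness facts follow from the smoothness of $b$, $\sigma$, $q_{\alpha\beta}$ in Assumptions \ref{A:DD}, \ref{A:SI} together with the $1$-periodicity in Assumption \ref{A:Periodicity}, which makes these functions continuous on the compact torus $\BT^\dd$, hence bounded. Set $A \triangleq \max_{1 \le \alpha \le \mm} \|a^\alpha\|_{L^\infty(\BT^\dd)}$, $B \triangleq \max_{1 \le \alpha \le \mm} \|\tilde b^\alpha\|_{L^\infty(\BT^\dd)}$, and recall $\bar q$ from Assumption \ref{A:SI}.

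For the diffusive term, the matrix bound $|(a^\alpha(x)\nabla u_\alpha, \nabla v_\alpha)_{\BR^\dd}| \le A |\nabla u_\alpha||\nabla v_\alpha|$ and Cauchy--Schwarz in $L^2_\per(\BT^\dd)$ give
$$\frac{1}{2}\Big|\sum_{\alpha=1}^\mm \int_{\BT^\dd}(a^\alpha \nabla u_\alpha, \nabla v_\alpha)_{\BR^\dd}\, dx\Big| \le \frac{A}{2}\sum_{\alpha=1}^\mm \|\nabla u_\alpha\|_{L^2} \|\nabla v_\alpha\|_{L^2} \le \frac{A}{2} \|\bm u\|_{\HH^1}\|\bm v\|_{\HH^1},$$
where the last step applies the discrete Cauchy--Schwarz inequality over $\alpha$. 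For the drift term, the same recipe with $|\tilde b^\alpha| \le B$ produces the bound $B \|\bm u\|_{\HH^1}\|\bm v\|_{\HH^1}$ since $\|u_\alpha\|_{L^2} \le \|u_\alpha\|_{H^1_\per(\BT^\dd)}$.

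The jump term is handled by writing $|u_\beta - u_\alpha| \le |u_\beta|+|u_\alpha|$, then using $|q_{\alpha\beta}(x)| \le \bar q$ (noting for $\alpha=\beta$ that $|q_{\alpha\alpha}| \le (\mm-1)\bar q$, so up to a constant factor depending only on $\mm$, all entries of $Q$ are uniformly bounded). A double application of Cauchy--Schwarz then gives a bound of the form $C(\mm,\bar q)\|\bm u\|_{\LL}\|\bm v\|_{\LL} \le C(\mm,\bar q)\|\bm u\|_{\HH^1}\|\bm v\|_{\HH^1}$. Summing the three contributions yields the claim with $K_{\ref{L:boundedness}} = \tfrac{A}{2} + B + C(\mm,\bar q)$. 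No step is genuinely difficult here; the mild bookkeeping nuisance is simply making sure that the cross sum $\sum_{\alpha,\beta}$ in the jump term is controlled cleanly, which is immediate since $\II$ is finite.
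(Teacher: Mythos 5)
Your proposal is correct and follows essentially the same route as the paper: decompose $\blf[\bm u,\bm v]$ into the diffusive, drift, and jump contributions, bound the coefficients $a^\alpha$, $\tilde b^\alpha$, $q_{\alpha\beta}$ using smoothness plus periodicity (compactness of $\BT^\dd$), and apply Cauchy--Schwarz in $L^2(\BT^\dd)$ and over the finite index set $\II$. One small redundancy: in the jump term the $\alpha=\beta$ summand vanishes identically since $u_\beta-u_\alpha=0$, so the separate bound $|q_{\alpha\alpha}|\le(\mm-1)\bar q$ is not actually needed.
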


\begin{proof}
Start by noting that $|\blf[\bm u,\bm v]| \le \sum_{i=1}^3 K_i$, where $K_1 \triangleq \frac{1}{2}\sum_{\alpha=1}^\mm \int_{\BT^\dd} \left|\left(a^\alpha(x)\nabla u_\alpha(x),\nabla v_\alpha(x)\right)_{\BR^\dd}\right| \thinspace dx$,\\ $K_2 \triangleq \sum_{\alpha=1}^\mm \int_{\BT^\dd} \left|\left(\tilde{b}^\alpha(x),\nabla u_\alpha(x)\right)_{\BR^\dd} v_\alpha(x)\right| \thinspace dx$, $K_3 \triangleq \sum_{\alpha=1}^\mm \sum_{\beta=1}^\mm \int_{\BT^\dd} \left|q_{\alpha\beta}(x)\left(u_\beta(x)-u_\alpha(x)\right) v_\alpha(x)\right| \thinspace dx$.
Routine calculations using the Cauchy-Schwarz and Holder inequalities, together with the boundedness of $a^\alpha(x)$, $b^\alpha(x)$, $q_{\alpha\beta}(x)$ yield the stated result. 
%\color{violet} (\textsf{Details in handwritten notes, in case needed.}) \color{black}
\end{proof}

% Lax-Milgram, compactness of resolvent
\begin{lemma}\label{L:Perturbed-PDE}
Let $\lambda \ge \lambda_{\ref{L:coercivity}}$ and let $I_\mm$ denote the $\mm \times \mm$ identity matrix. Then, for each $\bm \psi \in \LL$, there exists a unique weak solution $\bm{u}_\lambda \in \HH^1$ of the perturbed system of {\sc pde}
\begin{equation}\label{E:Perturbed-PDE}
(\lambda I_\mm - L - Q)\bm{u}_\lambda = \bm\psi.
\end{equation}
Further, the resolvent operator $G_\lambda \triangleq (\lambda I_\mm - L - Q)^{-1}:\LL \to \LL$ which maps the inhomogeneity $\bm\psi \in \LL$ to the unique solution $\bm{u}_\lambda \in \HH^1 \subset \LL$ of \eqref{E:Perturbed-PDE} is compact. 
\end{lemma}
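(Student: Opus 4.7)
The strategy is a standard Lax--Milgram argument for existence and uniqueness, followed by a Rellich--Kondrachov compactness step to get compactness of the resolvent.

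First I would verify that the perturbed bilinear form $\blf_\lambda[\bm u,\bm v]=\blf[\bm u,\bm v]+\lambda(\bm u,\bm v)_\LL$ satisfies the hypotheses of the Lax--Milgram theorem on $\HH^1\times\HH^1$. Boundedness is immediate: Lemma \ref{L:boundedness} controls $\blf$, while the additional term is bounded by $\lambda\|\bm u\|_\LL\|\bm v\|_\LL\le\lambda\|\bm u\|_{\HH^1}\|\bm v\|_{\HH^1}$. For $\lambda\ge\lambda_{\ref{L:coercivity}}$, Lemma \ref{L:coercivity} gives coercivity in the form $\blf_\lambda[\bm u,\bm u]\ge k_{\ref{L:coercivity}}\|\bm u\|_{\HH^1}^2$. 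The linear functional $\bm v\mapsto(\bm\psi,\bm v)_\LL$ is bounded on $\HH^1$ since $\|\bm v\|_\LL\le\|\bm v\|_{\HH^1}$. Lax--Milgram then produces a unique $\bm u_\lambda\in\HH^1$ with $\blf_\lambda[\bm u_\lambda,\bm v]=(\bm\psi,\bm v)_\LL$ for every $\bm v\in\HH^1$, which is exactly the weak solution of \eqref{E:Perturbed-PDE} in the sense of Definition \ref{D:weak-solution}.

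Next, testing with $\bm v=\bm u_\lambda$, coercivity on the left and Cauchy--Schwarz on the right give the a priori estimate $\|\bm u_\lambda\|_{\HH^1}\le k_{\ref{L:coercivity}}^{-1}\|\bm\psi\|_\LL$. Hence the solution map $T:\LL\to\HH^1$ defined by $T\bm\psi\triangleq\bm u_\lambda$ is a bounded linear operator.

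Finally, I would write $G_\lambda=\iota\circ T$, where $\iota:\HH^1\hookrightarrow\LL$ is the canonical inclusion. Because $\BT^\dd$ is a compact smooth manifold, the Rellich--Kondrachov theorem applied componentwise in the finite product $(H^1_\per(\BT^\dd))^\mm$ shows that $\iota$ is compact. Composing the bounded operator $T:\LL\to\HH^1$ with the compact inclusion $\iota:\HH^1\to\LL$ yields a compact operator $G_\lambda:\LL\to\LL$, as claimed.

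There is no real obstacle here, since Lemmas \ref{L:coercivity} and \ref{L:boundedness} have been engineered precisely to feed into Lax--Milgram, and the periodic Sobolev setting on $\BT^\dd$ makes Rellich--Kondrachov apply directly. The only substantive point to watch is that the shift by $\lambda I_\mm$ with $\lambda\ge\lambda_{\ref{L:coercivity}}$ is what promotes the possibly non-coercive $\blf$ into the coercive $\blf_\lambda$; without this shift one would at best get a Fredholm (rather than invertible) situation, which is in fact precisely how Proposition \ref{P:Fredholm} will be proved in the next step by combining this lemma with the Fredholm alternative for compact operators.
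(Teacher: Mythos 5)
Your argument is correct and follows essentially the same route as the paper's own proof: apply Lax--Milgram to the shifted form $\blf_\lambda$ using Lemmas~\ref{L:coercivity} and~\ref{L:boundedness}, derive the a priori estimate $\|\bm u_\lambda\|_{\HH^1}\le k_{\ref{L:coercivity}}^{-1}\|\bm\psi\|_\LL$ to show the solution operator is bounded from $\LL$ to $\HH^1$, and then compose with the compact embedding $\HH^1\hookrightarrow\LL$ to get compactness of $G_\lambda$. No discrepancies to report.
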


\begin{proof}
Fix $\lambda \ge \lambda_{\ref{L:coercivity}}$, and define $\blf_\lambda:\HH^1 \times \HH^1 \to \BR$ by $\blf_\lambda[\bm u,\bm v] \triangleq \blf[\bm u,\bm v] + \lambda (\bm u,\bm v)_\LL$. 
By Lemma \ref{L:coercivity}, we have $k_{\ref{L:coercivity}}\|\bm u\|_{\HH^1}^2 \le \blf_\lambda[\bm u,\bm u]$ for all $\bm u \in \HH^1$. 
Next, we see from Lemma \ref{L:boundedness} that $|\blf_\lambda[\bm u,\bm v]| \le (K_{\ref{L:boundedness}}+\lambda) \|\bm u\|_{\HH^1} \|\bm v\|_{\HH^1}$ for all $\bm u,\bm v \in \HH^1$. 
Note also that for any $\bm\psi \in \LL$ fixed, $\bm v \mapsto (\bm \psi,\bm v)_{\LL}$ defines a bounded linear functional on $\HH^1$.
Hence, by the Lax-Milgram Theorem (in the Hilbert space $\HH^1$), it follows that for each $\bm \psi \in \LL$, there exists a unique $\bm{u}_\lambda \in \HH^1$ satisfying $\blf_\lambda[\bm u,\bm v]=(\bm\psi,\bm v)_\LL$ for all $\bm v \in \HH^1$, i.e., a unique weak solution of \eqref{E:Perturbed-PDE}. In light of this, we can define a linear operator $G_\lambda \triangleq (\lambda I_\mm - L - Q)^{-1}$ which maps the inhomogeneity $\bm\psi \in \LL$ to the unique solution $\bm{u}_\lambda \in \HH^1 \subset \LL$ of \eqref{E:Perturbed-PDE}. 

We now want to show that the operator $G_\lambda:\LL \to \LL$ is compact. The first step is to show that $G_\lambda$ is a bounded linear operator from $\LL$ to $\HH^1$. To see this, let $\bm \psi \in \LL$ and write $\bm u=G_\lambda \bm \psi$. Note that by coercivity, we have $k_{\ref{L:coercivity}}\|\bm u\|_{\HH^1}^2 \le \blf_\lambda[\bm u,\bm u] = (\bm \psi, \bm u)_\LL \le \|\bm \psi\|_\LL \|\bm u\|_\LL$, where the latter follows from the Cauchy-Schwarz inequality. Recalling that $\|\bm u\|_\LL \le \|\bm u\|_{\HH^1}$, we now get $\|G_\lambda \bm \psi\|_{\HH^1} \le (1/k_{\ref{L:coercivity}})\|\bm \psi\|_\LL$, thereby proving the boundedness claim. Now suppose that $\{\bm \psi_n\}_{n=1}^\infty$ is a bounded sequence in $\LL$. By the calculation above, it follows that $\{G_\lambda \bm \psi_n\}_{n=1}^\infty$ is a bounded sequence in $\HH^1$. Since $\HH^1$ is compactly embedded in $\LL$ (\cite{Robinson}, \cite{Temam}), 
%\color{red} (\textsf{Check this!})\color{black}, 
it follows that there exists a subsequence $\{G_\lambda \bm \psi_{n_k}\}_{k=1}^\infty$ and $\bm v \in \LL$ such that $\|G_\lambda \bm \psi_{n_k} - \bm v\|_\LL \to 0$ as $k \to \infty$. In other words, $G_\lambda:\LL \to \LL$ is a compact operator.
\end{proof}

Note that a similar result holds for the adjoint system of {\sc pde} \eqref{E:wces-adjoint}. We now provide the proof of the Fredholm alternative.

% Lax-Milgram, compactness of resolvent for adjoint
%\begin{lemma}\label{L:Perturbed-PDE-adjoint}
%Let $\lambda \ge \lambda_{\ref{L:coercivity}}$ and let $I_\mm$ denote the $\mm \times \mm$ identity matrix. Then, for each $\bm \zeta \in \LL$, there exists a unique weak solution $\bm{v}_\lambda \in \HH^1$ of the perturbed system of {\sc pde}
%\begin{equation}\label{E:Perturbed-PDE-adjoint}
%(\lambda I_\mm - L^* - Q^*)\bm{v}_\lambda = \bm\zeta.
%\end{equation}
%Further, the resolvent operator $H_\lambda \triangleq (\lambda I_\mm - L^* - Q^*)^{-1}:\LL \to \LL$ which maps the inhomogeneity $\bm\zeta \in \LL$ to the unique solution $\bm{v}_\lambda \in \HH^1 \subset \LL$ of \eqref{E:Perturbed-PDE-adjoint} is compact. 
%\end{lemma}
%
%\begin{proof}
%The proof easily follows from the observations in Remark \ref{R:adjoint-pde} using arguments similar to those above. 
%\end{proof}

\begin{proof}[Proof of Proposition \ref{P:Fredholm}]
We start by recalling the notation at the beginning of Section \ref{S:Fredholm} for expressing functions $f(x,\alpha):\BR^\dd \times \II \to \BR$ in terms of vector-valued functions $\bm f=(f_1,\dots,f_\mm):\BR^\dd \to \BR^\mm$ and the corresponding equivalent statements \eqref{E:generic-elliptic-pde} and \eqref{E:wces} of the same {\sc pde} .

Let $\lambda \ge \lambda_{\ref{L:coercivity}}$ and suppose that $\bm \psi \in \LL$. The function $\bm u \in \HH^1$ solves $(-L-Q)\bm u =\bm \psi$ if and only if $(\lambda I_\mm - L - Q)\bm u=\bm \psi + \lambda \bm u$, i.e., $\bm u$ solves $\bm u = G_\lambda(\bm \psi + \lambda \bm u)$. Noting that the right-hand side of the latter belongs to $\HH^1$ whenever $\bm u \in \LL$, we conclude that 
\begin{equation}\label{E:Fredholm-setup}
\text{$\bm u \in \LL$ belongs to $\HH^1$ and solves $(-L-Q)\bm u =\bm \psi$ if and only if $(I_\mm - \lambda G_\lambda) \bm u = G_\lambda \bm \psi$}.
\end{equation} 
Owing to the compactness of the operator $\lambda G_\lambda$, we now use the Fredholm alternative (in the Hilbert space $\LL$) to deduce that either
\begin{enumerate}
\item[(\texttt{A})] For every $\bm h \in \LL$ (and hence, in particular, for $\bm h=G_\lambda \bm \psi$), the equation $(I_\mm-\lambda G_\lambda)\bm u=\bm h$ has a unique solution, or 
\item[(\texttt{B})] The homogeneous equations $(I_\mm - \lambda G_\lambda^*)\bm v_0=0$ and $(I_\mm - \lambda G_\lambda)\bm V_0=0$ have nontrivial solutions with $1 \le \mathsf{dim}(\nullspace(I_\mm - \lambda G_\lambda))=\mathsf{dim}(\nullspace(I_\mm - \lambda G_\lambda^*))< \infty$. In this case, $(I_\mm-\lambda G_\lambda)\bm u=\bm h$ has a solution if and only if $\bm h$ is orthogonal to the null space of $I_\mm - \lambda G_\lambda^*$, i.e., $(\bm h,\bm v_0)_\LL=0$ for every $\bm v_0 \in \nullspace(I_\mm - \lambda G_\lambda^*)$.
\end{enumerate}

We next establish that the second of these possibilities, viz., (\texttt{B}) holds.
To start, we claim that $(I_\mm - \lambda G_\lambda^*)\bm v_0=0$ iff $\bm v_0$ is a weak solution of $(-L^*-Q^*)\bm v_0=0$ and $(I_\mm - \lambda G_\lambda)\bm V_0=0$ iff $\bm V_0$ is a weak solution of $(-L-Q)\bm V_0=0$.
% \color{orange} (\textsf{I want to show that $\bm v_0 \in \nullspace(I_\mm - \lambda G_\lambda^*)$ iff $(-L^*-Q^*)\bm v_0=0$, i.e., $\blf^*[\bm v_0,\bm w]=0$ for all $\bm w \in \HH^1$.}) \color{black}\\
We now show that the solution spaces to the {\sc pde} $(-L-Q)\bm V_0=0$ and $(-L^*-Q^*)\bm v_0=0$ are one-dimensional. Proposition B.2 in \cite{PeletierSchlottke_EJP2024} on principal eigenvalues for certain weakly coupled elliptic systems with smooth periodic coefficients implies that there exists a unique $\lambda \in \BR$ and a smooth vector-valued function $\bm V_0$ with positive components satisfying $(-L-Q)\bm V_0=\lambda \bm V_0$. Recalling Assumption \ref{A:SI}, it easily follows that the vector function $\bm V_0=(1,\dots,1)$ satisfies $(-L-Q)\bm V_0=0$. Now, we use Proposition 1 in \cite{AllaireHutridurga-DCDSB-2015} to conclude that $\lambda = 0$ is also a real, simple eigenvalue of the adjoint system $(-L^*-Q^*)\bm v_0=\lambda \bm v_0$. Further, the eigenfunctions $(v_{0,1},\dots,v_{0,\mm})$ of the adjoint system are positive and unique up to the normalization $\sum_{\alpha=1}^\mm \int_{\BT^\dd} v_{0,\alpha}(x) \thinspace dx=1$. Applying Proposition B.2 in \cite{PeletierSchlottke_EJP2024} to the adjoint system $(-L^*-Q^*)\bm v_0=0$ assures us that the functions $v_{0,\alpha}(x)$, $1 \le \alpha \le \mm$, are also smooth. Now setting
\begin{equation*}
m(x,\alpha) \triangleq v_{0,\alpha}(x) \quad \text{for $x \in \BR^\dd$, $1 \le \alpha \le \mm$,}
\end{equation*}
we obtain the unique invariant density $m(x,\alpha)$ in \eqref{E:inv-dens-pde} solving $\gen^* m(x,\alpha)=0$.

Let $\varphi:\BR^\dd \times \II \to \BR$ be a smooth function which is $1$-periodic in $x$. Note that the {\sc pde} $\gen \Phi(x,\alpha)=\varphi(x,\alpha)$ has a solution iff the system $(-L-Q)\bm \Phi=-\bm \varphi$ has a solution, which, by \eqref{E:Fredholm-setup}, is equivalent to $(I-\lambda G_\lambda)\bm \Phi=-G_\lambda \bm \varphi$. By the Fredholm alternative (\texttt{B}), this occurs precisely when $-G_\lambda \bm \varphi$ is orthogonal to $\nullspace(I_\mm - \lambda G_\lambda^*)$. Recalling the normalized eigenfunction $\bm v_0$ above which spans $\nullspace(I_\mm - \lambda G_\lambda^*)$ and satisfies $(1/\lambda)\bm v_0=G_\lambda^* \bm v_0$, and using properties of adjoints, we see that 
%\begin{equation*}
$(-L-Q)\bm \Phi=-\bm \varphi$ has a solution if and only if $-(G_\lambda \bm \varphi,\bm v_0)_\LL=-(\bm \varphi,G_\lambda^* \bm v_0)_\LL=-\frac{1}{\lambda}(\bm \varphi,\bm v_0)_\LL=0$. In other words, $\gen \Phi(x,\alpha)=\varphi(x,\alpha)$ has a solution if and only if $\sum_{\alpha=1}^\mm \int_{\BT^\dd} \varphi(x,\alpha) m(x,\alpha) \thinspace dx=0$, as claimed. To show smoothness of $\Phi$ when the solvability condition \eqref{E:centering} holds, one appeals to the smoothness of the drift-diffusion coefficients and switching intensities and leans on smoothness results from the {\sc pde} literature; see, for instance \cite{CDN_HAL}. 
%\end{equation*}
\end{proof}

% Final section: Conclusions
\section{Conclusions}\label{S:Conclusions}
In the present work, we have focused on characterizing the limiting fluctuations, about a constant velocity drift, for a multiscale switching diffusion process with purely periodic drift/diffusion coefficients and switching intensities. The novel finding was the identification of an extra contribution to the limiting diffusion matrix stemming from the switching. A key role was played in the analysis by the assumptions of uniformly elliptic diffusion matrices, irreducible switching dynamics, and purely periodic coefficients. It would be interesting to see if one or more of these assumptions can be weakened. One direction would be to explore homogenization for switching diffusions in the hypoelliptic case, building on the work on \cite{HairerPavliotis_JStatPhys2004}. Another line of inquiry would be to consider the case of \textit{weakly periodic} coefficients; in other words, to study systems similar to those considered in \cite{DellaCorteKraaij} in a diffusive scaling. Finally, it would be worth exploring how the results here can be useful in the analysis of flashing ratchet models of molecular motors, the workings of which provided part of the impetus for the efforts in \cite{PeletierSchlottke_EJP2024}, \cite{DellaCorteKraaij}, and the present work.

\bibliographystyle{alpha}
\bibliography{PerHomSwDiff}

%\item Make precise exact continuity and smoothness assumptions on $b_i(x,\alpha)$, $\sigma_{ij}(x,\alpha)$, $q_{\alpha\beta}(x)$. \\
%\color{NavyBlue} Continuity of $\sigma_{ij}(x,\alpha)$, $q_{\alpha\beta}(x)$ is certainly needed in applying the Ergodic Lemma (Lemma \ref{L:ergodicity}) to $\la M^{\eps,k},M^{\eps,l}\ra_t$. Smoothness of all coefficients will likely greatly facilitate (i) going back and forth between divergence and non-divergence forms, and (ii) smoothness of solutions to the cell problem. \color{black}

\end{document}